\documentclass[a4paper,12pt]{amsart}

\usepackage{amssymb}
\usepackage{latexsym}
\usepackage{amsmath}
\usepackage{rotating}
\usepackage[all]{xy}
\usepackage[latin1]{inputenc}
\usepackage[usenames,dvipsnames]{color}
\usepackage{amsthm,amsfonts,amsxtra,amssymb,amscd}
\usepackage{fullpage}

\newcommand{\spmo}{{\mathbb S}}
\newcommand{\mC}{{\mathbb C}}

\newcommand{\mP}{{\mathbb P}}

\newcommand{\gra}{\alpha}         
\newcommand{\grd}{\delta}  \newcommand{\gre}{\varepsilon} 
  \newcommand{\grl}{\lambda}     \newcommand{\grs}{\sigma}
\newcommand{\grf}{\varphi}

 \newcommand{\goX}{\mathfrak X}

\newcommand{\gob}{\mathfrak b}  
  \newcommand{\gog}{\mathfrak g}
\newcommand{\goh}{\mathfrak h}  
  
 \newcommand{\goo}{\mathfrak o} 
  \newcommand{\gos}{\mathfrak s}
\newcommand{\got}{\mathfrak t}

  \newcommand{\calG}{\mathcal G}
 \newcommand{\calI}{\mathcal I} 
\newcommand{\calK}{\mathcal K} \newcommand{\calL}{\mathcal L} 
  
 \newcommand{\calR}{\mathcal R} \newcommand{\calS}{\mathcal S}
 \newcommand{\calU}{\mathcal U}

\newcommand{\sfA}{\mathsf A}
\newcommand{\sfB}{\mathsf B} \newcommand{\sfC}{\mathsf C} 
\newcommand{\sfE}{\mathsf E}

  \newcommand{\sfS}{\mathsf S}

  \newcommand{\mtd}{\mathrm d}

\def\N{{\mathbb N}}
\def\Z{{\mathbb Z}}

\def\C{{\mathbb C}}

\def\Id{\mathop{\rm Id}\nolimits}

\def\Spin{\mathop{\rm Spin}\nolimits}

\newcommand{\gbin}[3]{{#1 \brack #2}_{#3}}
\newcommand{\pf}{{\mathop{\rm pf}\nolimits}}

\def\lieg{\mathfrak{g}}

\newcommand{\Pf}   {\operatorname{Pf}}
\newcommand{\wt}   {\operatorname{wt}}
\newcommand{\End}  {\operatorname{End}}
\newcommand{\Hom}  {\operatorname{Hom}}
\newcommand{\Tr}   {\operatorname{Tr}}

\newcommand{\LGr}  {{\calG r}}

\newcommand{\mfor}     {\text{ for }} 
\newcommand{\mif}      {\text{ if }}

\newcommand{\lra}      {\longrightarrow}
\newcommand{\isocan}   {\simeq}
\newcommand{\vuoto}    {\varnothing}

\renewcommand{\geq}    {\geqslant}
\renewcommand{\leq}    {\leqslant}
\newcommand{\senza}    {\smallsetminus}

\def\emptyRow{{\varnothing}}

\DeclareMathAlphabet{\mathpzc}{OT1}{pzc}{m}{it}
\newcommand{\calp}{\mathpzc{p}}

\newcommand{\verleq}{\begin{sideways}$\geq$\end{sideways}}
\newcommand{\vergreater}{\begin{sideways}$<$\end{sideways}}
\newcommand{\rowsSymbol}{\mathcal{R}}
\newcommand{\standardRowsSymbol}{\mathcal{SR}}
\newcommand{\rows}[1]{\rowsSymbol_{#1}}
\newcommand{\distintRows}[1]{\rowsSymbol^+_{#1}}
\newcommand{\standardRows}[1]{\standardRowsSymbol_{#1}}
\newcommand{\evenRows}[1]{\rowsSymbol^0_{#1}}
\newcommand{\evenDistintRows}[1]{\rowsSymbol^{+,0}_{#1}}
\newcommand{\evenStandardRows}[1]{\standardRowsSymbol^0_{#1}}
\newcommand{\replace}{\longrightarrow}
\newcommand{\myLabel}[1]{\label{#1}}

\theoremstyle{definition}
\newtheorem{definition}{Definition}
\newtheorem{remark}[definition]{Remark}
\newtheorem{example}[definition]{Example}
\theoremstyle{theorem}
\newtheorem{lemma}[definition]{Lemma}
\newtheorem{proposition}[definition]{Proposition}
\newtheorem{theorem}[definition]{Theorem}
\newtheorem*{theorem_}{Theorem}
\newtheorem{corollary}[definition]{Corollary}

\title{Pfaffians and shuffling relations for the spin module}

\author{R. Chiriv\`\i\ and A. Maffei}

\begin{document}

\maketitle

\begin{abstract}
We present explicit formulas for a set of generators of the ideal of
relations among the pfaffians of the principal minors of the
antisymmetric matrices of fixed dimension. These formulas have an
interpretation in terms of the standard monomial theory for the spin
module of orthogonal groups.
\end{abstract}

\section*{Introduction}

Let $\goX_{n+1}$ be the set of $(n+1)\times(n+1)$ antisymmetric
matrices over the complex number. It is well known that the determinant $\goX_{n+1}\ni X\longmapsto\det X\in\C$
is the square of a polynomial function $\goX_{n+1}\ni
X\longmapsto\Pf(X)\in\C$ called the \emph{pfaffian} of a
matrix. In particular only the even dimensional principal minors of
$X\in\goX_{n+1}$ have non zero pfaffian.

Let $\sfB$ be the ring $\C[x_{ij}\,|\,1\leq i<j\leq n+1]$. As a set of generators of $\sfB$ we choose the pfaffians of the (even dimensional) principal
minors of matrices in $\goX_{n+1}$.  In this paper we want to describe
the relations among these generators. This is a classical problem
and may be seen as analogous to the Pl\"ucker relations for
determinants. Indeed the formulas we present are very similar to
Pl\"ucker formulas. In order to state such formulas we introduce some notations.

The pfaffians of the principal minors are indexed by lists of even
length of integers in $\{1,2,\ldots,n+1\}$, or, equivalently, by
\emph{row}, i.e. list $I=i_1i_2\cdots i_r$ (of any length) of integers
in $\{1,2,\ldots,n\}$ in the following way: define $I^0$ be the either
the same sequence $I$ if $r$ is even or the sequence $i_1i_2\cdots
i_r,n+1$ if $r$ is odd and let $\pf_I$ be the polynomial function on
$\goX_{n+1}$ given by the pfaffian of the principal minor with rows
and columns in $I^0$. For convenience we set also $\pf_\vuoto$ to be
equal to the constant polynomial $1$.  This indexing procedure may
look unusual but it makes easier to write the relations among
pfaffians.

Consider the action $\sigma\cdot(i_1\cdots
i_r)=\sigma(i_1)\cdots\sigma(i_r)$ of the symmetric group $S_n$ on the
set of rows. Given a row $I$ containing distinct entries let
$\tau_I\in S_n$ be the permutation reordering the entries of $I$ in
increasing order and fixing all integers not appearing in $I$ and let
$\gre(I)=(-1)^{\tau_I}$. We have $\pf_{\grs\cdot I} = \gre(I) \pf_I$
and $\pf_I=0$ if there are repetitions in $I$.  If
$I$ and $J$ are rows we set $IJ$ to be the row obtained by listing the
elements of $J$ after the elements of $I$. For a row $I=i_1i_2\cdots i_r$, let $|I|\doteq r$ be its length.

Let us define an order on rows as follows: $R=i_1i_2\cdots i_r\leq
S=j_1j_2\cdots j_s$ if $r\geq s$ and $i_h\leq j_h$ for
$h=1,2,\ldots,s$. For each pair of rows $R, S$ with increasing
entries which are not comparable with respect to this order we
construct a relation among pfaffians in the following way. Assume $R$
is not shorter than $S$ and let $R=IJ$ and $S=HK$ be such that $|I|=r$,
$|H|=r+1$, any entry of $I$ is less or equal to the corresponding
entry of $H$ and the first entry of $J$ is greater than the last entry
of $H$; so we have the first violation of the order condition in the $(r+1)$-th column.

\begin{theorem_}
For each pair of rows as above, we have the following relation among pfaffians:
\begin{equation}\label{equazione}
\sum(-1)^{\frac{h(h+1)}{2}}\gbin{|J\setminus K|+h}{h}{-1}
\gre(H'J')\gre(Z_2K'_1)\,\pf_{IJ'K'_1}\pf_{H'K'_2} = 0
\end{equation}
with $h\doteq|H|-|H'|$, $Z_1\doteq I\cap H$ and
$Z_2\doteq J\cap K$ as ordered rows and the sum running over
the set of all quadruples $(J',H',K'_1,K'_2)$ of rows with
increasing entries such that
\begin{itemize}
   \item[(1)] $|H'|  \leq   |H| $,
   \item[(2)] $ H'   \sqcup  J'   = H\sqcup J$,
   \item[(3)] $ K'_1 \sqcup  K'_2 = K$,
   \item[(4)] $ Z_1  \subset H' $,
   \item[(5)] $ Z_2  \subset J' \cap K'_2$.
\end{itemize}
Moreover these relations generate the ideal of relations
among pfaffians.
\end{theorem_}

In the formula above we have denoted, for $0\leq k\leq m$, by $\gbin{m}{k}{q}$ the \emph{gaussian binomials} defined as 
the element $\frac{(1-q^m)(1-q^{m-1})\cdots(1-q^{m-k+1})}{(1-q^k)(1-q^{k-1})\cdots(1-q)}$
of $\C[q]$ and $\gbin{m}{k}{-1}$ is this polynomial evaluated in $-1$.

\medskip

Our interest for this topic stems from the standard monomial
theory. Consider $V=\mC^{2n+2}$ equipped with a non degenerate
symmetric bilinear form such that the subspaces $V_1$ and $V_2$,
generated respectively by the first and the last $n+1$ vectors of the
canonical basis of $V$, are totally isotropic. The variety of the $(n+1)$--dimensional
totally isotropic subspaces of $V$ has two connected components; let
the \emph{positive lagrangian grassmannian} $\LGr$ be the component of those subspaces
whose intersection with
$V_2$ is even dimensional. The special orthogonal group of $V$ acts on this variety, let $G=\Spin(2n+2)$ be its simply
connected cover and $\spmo$ the Spin module of $G$.

Then $\LGr$ embeds into $\mP(\spmo)$ and the study of relations among pfaffians
is equivalent to the study of the equations defining the cone over this embedding of $\LGr$ (see section \ref{sectionPfaffians}). Notice that on the
grassmannian side we have a natural action of a bigger group of
symmetries which is not apparent on the pfaffian picture of the problem.
Moreover in the study of the coordinate ring of this embedding we can make use of the standard monomial theory.

The standard monomial theory is a very general theory which construct
a basis of the projective coordinate ring given by the immersion of a
generalized grassmannian or a flag variety. The prototypical example of
the theory is given by the work of Hodge on the Pl\"ucker immersion of
a grassmannian \cite{Hodge}. The idea of Hodge was generalized to a
projective immersion of partial flag varieties for a
semisimple group $G$ (and even to a Kac Moody one) between the
seventies and the nineties by the work of many peoples, Seshadri,
Lakshmibai, Musili, De Concini, Eisenbud, Procesi and Littelmann. The
results of standard monomial theory have many consequences both in
representation theory and in the study of singularities of Schubert
varieties and other related varieties. However the theory does not give
an explicit description of the equations beyond the original case of
the Pl\"ucker immersion of a grassmannian and some other very simple
cases.

In our case the standard monomial theory takes the following form. Let $\sfA$ be the coordinate ring of the embedding $\LGr\hookrightarrow\mP(\spmo)$ and let $\sfA_m$ be the subspace of homogeneous polynomials of degree $m$. There
exists a basis $x(I)$ of $\sfA_1$ indexed by increasing rows $I$ as above.
A \emph{tableau} $T=(I_1, I_2,\cdots, I_m)$ is simply a sequence of
rows; it is \emph{standard} if all rows have increasing entries and
$I_1\leq I_2\leq\cdots\leq I_m$. The set of standard monomials
$x(T)\doteq x(I_1)x(I_2)\cdots x(I_m)$ for $T$ standard are a basis for $\sfA_m$.  So
each section $x(T)$ with $T$ non standard may be written as a linear
combination of standard ones, these relations are called
\emph{straightening relations} for $\sfA$. Since the ideal of
relations in the generators $x(I)$ is generated by quadratic elements
it is enough to consider only non standard tableaux with two rows. An important point
is that in the straightening relation for the tableau $T=(I,J)$ only
standard tableaux $(H,K)$ with $H\leq I,\,J$ and $K\geq I,\,J$ do appear.

This is the core idea of the standard monomial theory: to replace the
knowledge of the (very) complicated explicit straightening relations
by that of the order condition stated above. Indeed this condition is
sufficient to deduce quite a lot about the geometry of the flag
variety $\LGr$.

The equations we write down for the generators $x(I)$ are given by
formula \eqref{equazione} replacing $\pf_I$ with
$x(I)$.  These are not straightening relations but they are what we
call \emph{shuffling relations}: given a non standard tableau $T$ with
two rows we say that the element $f=\sum a_h\, x(I_h)x(J_h)$, with
the $a_h\in\C$, is a shuffling relation for $T$ if $f=0$, $T$
appears with coefficient $1$ and all other tableaux do fulfill the
order condition of a straightening relation for $T$. In particular we are
\emph{not} asking that all tableaux but $T$ must be standard. It is
clear that by a finite number of steps we may deduce the straightening
relations from the shuffling relations; so these weaker relations are
still a set of generators for the ideal of relations. (The name
`shuffling' is the same name used for the analogous relations for the
determinants of minors of a matrix of indeterminates.) Notice that
also the classical Pl\"ucker equations are shuffling relations and not
straightening relations.

From the point of view of standard monomial theory, the easiest
cases, which were also the first ones to be analyzed \cite{Seshadri},
are those in which the embedding is in the projective space of a
minuscule $G$--module. Recall that an irreducible $G$--module is said
to be \emph{minuscule} if its weights are a single orbit under the Weyl group.

The condition for a module (for a general semisimple group) to be
minuscule is very strong and, so, very few modules are
minuscule. Moreover for all minuscule modules for classical groups but
the spin module explicit shuffling relations are known. Indeed all
such modules have an extremely simple order structure except the
fundamental representations of type $\sfA$ which corresponds to the
Pl\"ucker immersion of a grassmannian and the Spin modules. In particular our module $\spmo$ is a minuscule one, and the other are either twisted form of $\spmo$ or restriction of $\spmo$ to $\Spin(2n+1)$, so the same result for these other cases may be easily deduced from our result.

For the exceptional groups only two modules for $\sfE_6$ (one dual to the other) and one module
for $\sfE_7$ are minuscule. (For $\sfE_7$ one may see \cite{Equations} for
some hints to explicit formulas for straightening relations.)

\medskip

After completing this paper we came to known that formulas very
similar to ours have already been found by Kustin in
\cite{Kustin}. However the proof in that paper is very different from
our: while being much elementary, since it uses only multilinear
algebra, it does not exploit the role played by the representation
theory of the spin module. So we still think our proof of the
shuffling relations may have some interest; at least we hope this
paper may bring some attention to the paper of Kustin which, in our
opinion, is not very known.

The paper is organized as follows. In the first section we collect
some combinatorial definitions about rows, tableaux and standardness
and the definition and some properties of the gaussian binomials. The
second section introduce the spin group, the spin module and the
related grassmannian. Then the standard monomial theory for this
module is shortly discussed with a remark about a general invariance
of the relations defining a flag variety. In the third section we see
the definition of a pfaffian and the relations of such polynomial
functions with the spin module. Using this links we are able to prove
some invariance properties of the ideal of relations. In Section
\ref{sectionShufflingRelations} we introduce our formulas and prove
some combinatorial properties of such formulas. In
Section \ref{sectionRelations} we prove that the formulas are indeed shuffling relations.
Finally in the last section we extend these results to an arbitrary field.

We would like to thank Corrado De Concini, Peter Littelmann and Seshadri for useful conversations.

\section{Combinatorics}\myLabel{sectionCombinatorics}

\subsection{Rows and tableaux}

We call a finite sequence (maybe empty) of integers a \emph{row}. Let
$\rows{n}\supset\distintRows{n}\supset\standardRows{n}$ be
respectively the set of all rows in the alphabet $\{1,2,\ldots,n\}$,
the subset of rows containing distinct elements and the subset of
\emph{standard} rows, i.e. of rows $I=i_1i_2\cdots i_r$ with
$i_1<i_2<\cdots<i_r$. We define the lenght of the row $I=i_1i_2\cdots
i_r$ as $|I|\doteq r$, moreover the (standard) empty row is denoted by
$\emptyRow$. If $I \in \calR^+_n$ we define
$\widetilde{I}$ as the \emph{complementary} row $j_1\ldots j_{n-r}$ of
$I$ where
$\{i_1,\ldots,i_r\}\sqcup\{j_1,\ldots,j_{n-r}\}=\{1,2,\ldots,n\}$ and
$j_1<j_2<\ldots<j_{n-r}$.
If $I=i_1\dots i_r$ and $J= j_1\dots j_s$ are two rows we set $IJ
\doteq i_1\dots i_rj_1\dots j_s$.

Given an integer $h$ we define $I*h$ as the row obtained by adding $h$
to the end of the row $I$, futher if $k$ is another integer then
$I(h\replace k)$ is the row obtained by replacing all occurrences (if
any) of $h$ by $k$ in $I$. Hereafter we write $h\in I$ to say that
$h$ appears as an element in the row $I$ and, in general, we use the
language of sets with rows when this does not create any ambiguity.

The symmetric group $S_n$ acts on the set of rows: for $I=i_1i_2\cdots
i_r$ and $\sigma\in S_n$ we set $\sigma\cdot
I\doteq\sigma(i_1)\sigma(i_2)\cdots \sigma(i_r)$. Given a row
$I=i_1i_2\cdots i_r$ in $\distintRows{n}$ let $I^\leq$ be the row with
the same entries of $I$ rearranged in ascending order, moreover let
$\tau_I\in S_n$ be such that $\tau_I\cdot I=I^\leq$ and $\tau_I$ fixes
each integer $h\not\in I$, let also $\gre(I)$ be the sign of $\tau_I$.

%We now define a partial order $\leq$ and a total order $\leq_t$. If
%$I=i_1\cdots i_r,\,J=j_1\cdots j_s$ are rows we set $I<_t J$ if
%$|I|>|J|$ or if %$|I|=|J|$ and there exists $h\leq |I|$ such that
%$i_1=j_1, \dots, i_{h-1}=j_{h-1}$ and $i_h<j_h$.

We define the partial order for the standard rows: if $I=i_1\cdots
i_r,\,J=j_1\cdots j_s$ are standard rows we set $I\leq J$ if
$|I|\geq|J|$ and $i_m\leq j_m$ for all $1\leq m\leq|J|$.

A \emph{tableau} is a sequence of rows $T=(I_1, I_2, \cdots,
I_d)$ with $|I_1|\geq|I_2|\geq\cdots\geq|I_d|$; it is
standard if all rows are standard and $I_1\leq I_2\leq \cdots\leq
I_d$, the \emph{degree} of $T$ is the number $d$ of rows. If
$I_m=i_{m,1}i_{m,2}\cdots i_{m,r_m}$ for $m=1,\ldots,d$ then, as
customary, we write $T$ arranged vertically and left justified
$$
\left(
\begin{array}{cccccc}
i_{1,1} & i_{1,2} & \cdots & \cdots & \cdots & i_{1,r_1}\\
i_{2,1} & i_{2,2} & \cdots & \cdots & i_{2,r_2}\\
\vdots\\
i_{d,1} & i_{d,2} & \cdots & i_{d,r_d}\\
\end{array}
\right)
$$ so that $T$ is standard if and only if its entries increase along
the rows and do not decrease along the columns.

Finally let $\evenRows{n}$ be the subset of rows of even length of
$\rows{n}$ and define analogously $\evenDistintRows{n}$ and
$\evenStandardRows{n}$. Given a row $I\in\rows{n}$ let
$I^0\in\evenRows{n+1}$ be either the same row if $|I|$ is even or the
row $I*(n+1)$ otherwise; then the map $I\mapsto I^0$ is clearly a
bijection between $\standardRows{n}$ and $\evenStandardRows{n+1}$.

\subsection{Gaussian binomials}

Let $q$ be an indeterminate and define the \emph{gaussian binomials}
as the following elements of $\C[q]$: for all $m,k\in\Z$
$$
\gbin{m}{k}{q}\doteq\left\{
\begin{array}{ll}
\frac{(1-q^m)(1-q^{m-1})\cdots(1-q^{m-k+1})}{(1-q^k)(1-q^{k-1})\cdots(1-q)} & \textrm{if }0\leq k\leq m\\
0 & \textrm{otherwise.}
\end{array}
\right.
$$

The evaluation of $q$ to $-1$ is of particular interest for our aims;
the following result is easily proved by induction.
\begin{lemma}\myLabel{lemmaGaussianEvaluation}
For all $0\leq k\leq m$ we have
$$
\gbin{m}{k}{-1}=\left\{
\begin{array}{ll}
{\left\lfloor \frac{m}{2}\right\rfloor \choose \left\lfloor \frac{k}{2}\right\rfloor} & \textrm{if }m\textrm{ is odd or }k\textrm{ is even}\\
0 & \textrm{otherwise.}
\end{array}
\right.
$$
\end{lemma}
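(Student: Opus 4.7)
The plan is to proceed by induction on $m$, using the Pascal-type recurrence
$$\gbin{m}{k}{q} = \gbin{m-1}{k-1}{q} + q^k \gbin{m-1}{k}{q},$$
which follows directly from the definition via the identity $(1-q^k)+q^k(1-q^{m-k})=1-q^m$. Specializing $q=-1$ this becomes
$$\gbin{m}{k}{-1} = \gbin{m-1}{k-1}{-1} + (-1)^k \gbin{m-1}{k}{-1},$$
and the statement then reduces to a case analysis in the parities of $m$ and $k$. The base case $m=0$, $k=0$ is immediate since $\gbin{0}{0}{-1}=1={0 \choose 0}$, and the boundary cases $k=0$ and $k=m$ both give $1$, matching the claimed formula.

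For the induction step I would split into four sub-cases. When $m$ and $k$ are both even, both summands on the right are non-zero by the induction hypothesis (since $m-1$ is odd), and the recurrence collapses to the classical Pascal identity ${m/2-1 \choose k/2-1}+{m/2-1 \choose k/2}={m/2 \choose k/2}$. When $m$ is odd and $k$ is even, the first summand vanishes by the induction hypothesis (since $m-1$ is even and $k-1$ is odd), while the second equals ${(m-1)/2 \choose k/2}$, as wanted. When $m$ and $k$ are both odd, symmetrically the second summand vanishes and the first gives ${(m-1)/2 \choose (k-1)/2}$. Finally, in the critical case $m$ even and $k$ odd, the two surviving summands have identical binomial values, since $\lfloor (m-1)/2\rfloor = m/2-1$ and $\lfloor (k-1)/2\rfloor = \lfloor k/2\rfloor = (k-1)/2$, but are combined with the sign $(-1)^k=-1$, so they cancel and produce the asserted $0$.

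I do not expect any serious obstacle: once the $q=-1$ form of the Pascal recurrence is in hand, the argument is routine bookkeeping of floor functions. The only mildly delicate step is the last cancellation, where one must check carefully that $\lfloor (k-1)/2\rfloor$ and $\lfloor k/2\rfloor$ really coincide for $k$ odd, so that the two terms are equal in absolute value and the sign $(-1)^k$ produces a genuine cancellation rather than a doubling.
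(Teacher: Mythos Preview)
Your argument is correct and follows the same approach as the paper, which merely states that the result ``is easily proved by induction'' without giving details. Your use of the $q$--Pascal recurrence specialized at $q=-1$, followed by the parity case analysis, is precisely the expected way to carry out that induction.
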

In the next Lemma we see a $q$--analogous of a well known binomial
formula; it is needed in the proof of our main result.
\begin{lemma}\myLabel{lemmaGaussianBinomials}
For all $1\leq s\leq m$ we have
$$
\sum_{h=0}^s (-1)^hq^{\frac{h(h-1)}{2}}\gbin{m}{s-h}{q}\gbin{m-s+h}{h}{q}=0.
$$
\end{lemma}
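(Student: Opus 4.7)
The plan is to reduce the identity to the classical $q$-binomial theorem via a straightforward rearrangement of Gaussian binomials.

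First I observe the factorization identity
\[
\gbin{m}{s-h}{q}\gbin{m-s+h}{h}{q} = \gbin{m}{s}{q}\gbin{s}{h}{q},
\]
which follows by writing each Gaussian binomial in terms of $q$-factorials $[k]_q! \doteq \prod_{i=1}^{k}(1-q^i)$: both sides equal $\frac{[m]_q!}{[s-h]_q!\,[h]_q!\,[m-s]_q!}$. This is the exact $q$-analogue of the elementary identity $\binom{m}{s-h}\binom{m-s+h}{h} = \binom{m}{s}\binom{s}{h}$ used in the classical proof, and the calculation is just bookkeeping of factorials.

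Substituting this into the sum, the factor $\gbin{m}{s}{q}$ comes out and we are reduced to showing
\[
\sum_{h=0}^{s}(-1)^h q^{\frac{h(h-1)}{2}}\gbin{s}{h}{q}=0 \quad\text{for } s\geq 1.
\]
This is precisely the specialization at $x=-1$ of the $q$-binomial theorem
\[
\prod_{i=0}^{s-1}(1+q^i x)=\sum_{h=0}^{s}q^{\frac{h(h-1)}{2}}\gbin{s}{h}{q}\,x^h,
\]
and the product vanishes at $x=-1$ because the factor with $i=0$ equals $1-1=0$.

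If one prefers to avoid citing the $q$-binomial theorem, one can establish the last vanishing directly by induction on $s$ using the $q$-Pascal rule $\gbin{s}{h}{q}=\gbin{s-1}{h-1}{q}+q^{h}\gbin{s-1}{h}{q}$: splitting the sum accordingly and reindexing produces a factor $(1-q^{s-1})$ times the sum for $s-1$, which gives $0$ by induction, with base case $s=1$ being the obvious $1-1=0$. Either way, no step is a genuine obstacle; the whole argument is essentially the observation that the classical proof of $\sum_h (-1)^h\binom{s}{h}=0$ admits a clean $q$-analogue and that the factorization identity above lets us apply it directly.
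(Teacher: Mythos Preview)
Your main argument is correct and is actually cleaner than the paper's. The key step is the factorization
\[
\gbin{m}{s-h}{q}\gbin{m-s+h}{h}{q}=\gbin{m}{s}{q}\gbin{s}{h}{q},
\]
which is an honest identity of $q$-factorials, and after pulling out $\gbin{m}{s}{q}$ the remaining sum vanishes by the $q$-binomial theorem at $x=-1$. The paper does \emph{not} spot this factorization; instead it proves the identity by induction on $m$, showing that $\psi_{m+1,s}=\frac{1-q^{m+1}}{1-q^{m+1-s}}\,\psi_{m,s}$, which reduces everything to the diagonal case $s=m$, and only then invokes the same $q$-binomial theorem. Your route is shorter and more conceptual: it explains \emph{why} the identity holds (the two binomials secretly count the same thing) rather than verifying it recursively.

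One small quibble about your optional inductive aside: with the Pascal rule you wrote, $\gbin{s}{h}{q}=\gbin{s-1}{h-1}{q}+q^{h}\gbin{s-1}{h}{q}$, the two resulting sums cancel outright rather than producing a factor $(1-q^{s-1})$ times the sum for $s-1$. The factor $(1-q^{s-1})$ appears if you use the other form $\gbin{s}{h}{q}=q^{s-h}\gbin{s-1}{h-1}{q}+\gbin{s-1}{h}{q}$. Either way the conclusion is the same, so this does not affect the validity of your proof.
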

\begin{proof}
Let $\psi_{m,s}$ be the left hand side of the above identity. We use
induction on $m$. By the definition of the gaussian binomials we see
that for all $n\geq 0$ and $0\leq k\leq n+1$
$$
\gbin{n+1}{k}{q}=\frac{1-q^{n+1}}{1-q^{n+1-k}}\gbin{n}{k}{q}
$$
Hence for all $1\leq s\leq m$ we have
$$
\begin{array}{rcl}
\psi_{m+1,s} & = &
\sum_{h=0}^s(-1)^hq^{\frac{h(h-1)}{2}}\gbin{m+1}{s-h}{q}\gbin{m+1-s+h}{h}{q}\\ &
= &
\sum_{h=0}^s(-1)^hq^{\frac{h(h-1)}{2}}\frac{1-q^{m+1}}{1-q^{m+1-s+h}}\frac{1-q^{m+1-s+h}}{1-q^{m+1-s}}\gbin{m}{s-h}{q}\gbin{m-s+h}{h}{q}\\ &
= & \frac{1-q^{m+1}}{1-q^{m+1-s}}\psi_{m,s}\\ & = & 0\\
\end{array}
$$
by induction on $m$.

So we need to show that $\psi_{m,m}=0$ for all $m\geq 1$ to complete the proof. But
$$
\psi_{m,m}=\sum_{h=0}^m(-1)^hq^{\frac{h(h-1)}{2}}\gbin{m}{h}{q}
$$
and our claim follows by evaluating in $t=-1$ the Newton binomial formula
$$
\prod_{k=0}^{m-1}(1+q^kt)=\sum_{h=0}^mq^{\frac{h(h-1)}{2}}\gbin{m}{h}{q}t^h.
$$
\end{proof}

\begin{remark}\myLabel{remarkGaussianBinomial}
There is a certain link between gaussian binomials and Coxeter groups
of type $\sfA$. Recall that if $W=\langle s_1,s_2,\ldots,s_r\rangle$
is a Coxeter group and $I\subset\{s_1,s_2,\ldots,s_r\}$, then one may
define the set $W^I\hookrightarrow W$ of representatives of minimal
lenght of the quotient $W/W_I$, where $W_I$ is the subgroup generated
by $I$ in $W$. Moreover for any subset $S$ of $W$ the \emph{Poincar\'e
  polynomial} of $S$ is defined as $p_S(q)\doteq \sum_{\tau\in
  S}q^{\ell(\tau)}$, where $\ell:W\longrightarrow\N$ is the length
function.

We have
$p_{W^I}(q)=p_W(q)/p_{W_I}(q)$, the quotient of the Poincar\'e
polynomials of the two Coxeter groups $W$ and $W_I$. Moreover
$p_W(q)=\prod_i\frac{1-q^{d_i}}{1-q}$ where $d_1,d_2,\ldots$ are the
degrees of $W$ (see \cite{Humphreys}).

Now let $W$ be the symmetric group $S_m$ on $m$ elements, which is a
Coxeter group of type $\sfA_{m-1}$ with respect to the set of
generators $s_i=(i,i+1)$ for $i=1,2,\ldots,m-1$. Since the degrees of
a Coxeter group of type $A_{m-1}$ are $2,3,\ldots,m$, it follows at
once that the gaussian binomial $\gbin{m}{k}{q}$ is the Poincar\'e
polynomial of the set of minimal representatives of the quotient
$S_m/S_k\times S_{m-k}$.
\end{remark}

\section{The spin module}\myLabel{sectionSpinModule}

We fix once and for all the notation $\gre_1, \dots,\gre_n$ for the
standard basis of $\mC^n$ and we denote by $E_{ij}$ the matrix
associated to the linear map sending $\gre_j$ to $\gre_i$ and all
other elements of this basis to zero.

\subsection{The spin group}
On $V\doteq\mC^{2n+2}$ fix the symmetric bilinear form whose
associated matrix is $\left(\begin{smallmatrix}0 & I \\ I &
  0\end{smallmatrix}\right)$. Let $\gog\doteq\gos\goo(2n+2)$ and
  $G\doteq\Spin(2n+2)$ the associated simply connected group. In this
  basis $\gog$ is the set of all matrices $\left(\begin{smallmatrix} A
    & B \\ C & -^tA\end{smallmatrix}\right)$ with $A,B,C$
    $(n+1)\times(n+1)$ matrices and $B,\,C$ antisymmetric.

We fix the Chevalley generators, a Cartan subalgebra $\got$ and a
Borel subalgebra $\gob$ in the following way:
\begin{align*}
 e_i &\doteq \left(\begin{matrix} E_{i,i+1} & 0 \\ 0 &
   -E_{i+1,i}\end{matrix}\right) \mfor i=1,\dots,n,\\ e_{n+1} &\doteq
 \left(\begin{matrix} 0 & E_{n,n+1}-E_{n+1,n} \\ 0 &
   0\end{matrix}\right),\\ f_i &\doteq e_i^t\text{ for
   }i=1,\ldots,n+1,\\ h_i &\doteq \left(\begin{matrix} E_{i,i}-
     E_{i+1,i+1} & 0 \\ 0 &
     -E_{i,i}+E_{i+1,i+1}\end{matrix}\right)\mfor
   i=1,\dots,n,\\ h_{n+1} &\doteq \left(\begin{matrix}
     E_{n,n}+E_{n+1,n+1} & 0 \\ 0 &
     -E_{n,n}-E_{n+1,n+1}\end{matrix}\right), \\ \got &\doteq
   \{\left(\begin{matrix} A & 0\\ 0 & -A^t\end{matrix}\right)\in \gog
     \,|\, A \text{ is diagonal}\},\\ \gob
     &\doteq\{\left(\begin{matrix} A & B\\ 0 &
       -A^t\end{matrix}\right)\in \gog \,|\, A \text{ is upper
         triangular and }B\text{ is antisymmetric }\}.
\end{align*}
We denote with $T$ (resp. $B$) the maximal torus (resp. Borel subgroup) of $G$ whose Lie algebra is given by $\got$ (resp. $\gob$)and let $B^-$ be the Borel opposite to $B$.

We identify the Cartan subalgebra with $\mC^{n+1}$ mapping $\gre_i$ to $\left(\begin{smallmatrix} E_{i,i} & 0 \\ 0 &
-E_{i,i}\end{smallmatrix}\right)$ for $i=1,\ldots,n+1$ and, further, we identify the Cartan subalgebra with its dual using the standard form $\gre_i\cdot\gre_j=\grd_{ij}$ for $1\leq i,j\leq n+1$. In particular if we set $\gra_i \doteq h_i$, for $i=1,\ldots,n+1$, the set $\{\gra_1,\dots,\gra_{n+1}\}$ is a simple basis for the roots of $\gog$.
Let $\Lambda \doteq \Hom(T,\mC^*)\subset \got^*$ be the set of integral weights and $\Lambda^+$ be the subset of $\Lambda$ of dominant weights and let $\omega_1,\dots,\omega_{n+1}$ be the fundamental
weights. For $\grl\in \Lambda^+$ let $V_\grl$ be the irreducible
representation of $G$ of highest weight $\lambda$.

\subsection{The spin module}

We want now to give an explicit description of the dual of the irreducible module of highest weight $\omega_{n+1}$. This is called the \emph{positive spin module} and will be denoted with $\spmo$.

Define the vector space $\spmo\doteq\oplus_I\mC\cdot
x(I)$ with basis elements $x(I)$, $I\in\standardRows{n}$. Next we
define the \emph{weight} of a row $I\in\distintRows{n}$ as
$$
\wt(I)\doteq\tfrac{1}{2} (\sum_{i\in I^0} \gre_i - \sum_{i\notin I^0}\gre_i)\in\Lambda.
$$
Now $V$ has a $\gog$--module structure defined as follows:
$$
e_i(x(I)) = \left\{
\begin{array}{ll}
x(I(i+1\replace i)) & \textrm{if }i+1\in I,\,i\not\in I,\\
0 & \textrm{otherwise}
\end{array}
\right.
$$
$$
e_n(x(I)) = \left\{
\begin{array}{ll}
x(I*n) & \textrm{if }n\not\in I\textrm{ and }|I|\textrm{ is odd}\\
0 & \textrm{otherwise}
\end{array}
\right.
$$
$$
e_{n+1}(x(I)) = \left\{
\begin{array}{ll}
x(I*n) & \textrm{if }n\not\in I\textrm{ and }|I|\textrm{ is even}\\
0 & \textrm{otherwise.}
\end{array}
\right.\\
$$
$$
h_i(x(I))=\wt(I)x(I)\textrm{ for }i=1,2,\ldots,n+1.
$$
$$
f_i(x(I)) = \left\{
\begin{array}{ll}
x(I(i\replace i+1)) & \textrm{if }i\in I,\,i+1\not\in I,\\
0 & \textrm{otherwise}
\end{array}
\right.
$$
$$
f_n(x(I)) = \left\{
\begin{array}{ll}
x(I\setminus\{n\}) & \textrm{if }n\in I\textrm{ and }|I|\textrm{ is even}\\
0 & \textrm{otherwise}
\end{array}
\right.
$$
$$
f_{n+1}(x(I)) = \left\{
\begin{array}{ll}
x(I\setminus\{n\}) & \textrm{if }n\in I\textrm{ and }|I|\textrm{ is odd}\\
0 & \textrm{otherwise.}
\end{array}
\right.\\
$$
Here is an example of this action for $n=3$: for each basis vector of $\spmo$ we have drawn all operators $e_1,\,e_2,\,e_3$ which does not send that vector to $0$.
$$
\xymatrix{
       & x(123) \\
       &       & x(12)\ar[lu]_{e_4} \\
       & x(13)\ar[ru]_{e_2} \\
 x(23)\ar[ru]_{e_1} &       & x(1)\ar[lu]_{e_3} \\
       & x(2)\ar[lu]_{e_3}\ar[ru]_{e_1} \\
 x(3)\ar[ru]_{e_2} \\
       & x(\emptyRow)\ar[lu]_{e_4} \\
}
$$
%$$
%\xymatrix{
%\emptyRow\ar[d]_{e_4}\\
%3\ar[r]^{e_2} & 2\ar[r]^{e_1}\ar[d]_{e_3} & 1\ar[d]_{e_3}\\
% & 23\ar[r]^{e_1} & 13\ar[r]^{e_2} & 12\ar[d]_{e_4}\\
% & & & 123.
%}
%$$
Notice that $\spmo$ is irreducible, and its lowest weight is $-\omega_{n+1}$ so it is the dual of
$V_{\omega_{n+1}}$. Notice also that all the weights are in the orbit of $-\omega_{n+1}$, in particular the spin module is miniscule.

\subsection{The lagrangian grassmannian}

Let $V_1$ and $V_2$ be the span, respectively, of the first, and the last, $n+1$ vector of the canonical basis of $V$. Define the positive lagrangian grassmannian $\LGr$ as the variety of $(n+1)$--dimensional subspaces of $V$ which have even dimensional intersection with $V_2$. This is an homogeneous space for the special orthogonal group of $V$ and for $G$.

The Picard group of $\LGr$ has a unique ample generator that we denote by
$\calL$ which is $G$--linearizable and the
$G$--module $H^0(\LGr,\calL) = \{\eta:G\lra \C \,|\, \eta\textrm{
  is holomorphic and }\eta(gp^{-1}) =
\omega^{-1}_{n+1}(p)\eta(g)\textrm{ for all }g\in G\textrm{ and }p\in
B\}$ is isomorphic to $\spmo$.

The main object of study of this paper is the graded ring $\sfA \doteq
\bigoplus_m H^0(\LGr,\calL^m)$. For $m\geq 0$ let $\sfA_m \doteq
H^0(\LGr,\calL^m)$ be the space of its homogeneous components of degree
$m$. In particular $A_1=V_{\omega_{n+1}}^*$. The ring $\sfA$ is
generated in degree one with quadratic relations. We denote by
$\calK\subset S^2(\sfA_1) \lra \sfA_2$ the kernel of the
multiplication map; our aim here is to find explicitly generators for
this kernel. When we need to stress the rank $n+1$ of the spin group
we add a subscript $n$, for example $\calK_n$.

\subsection{Standard monomial theory}

On the module $\spmo$ we have defined the basis $x(I)$, $I\in\standardRows{n}$, now we
want to extend the symbol $x(I)$ to any row $I\in\rows{n}$: let
$x(I)\doteq(-1)^{\tau_I}x(I^\leq)$ if $I\in\distintRows{n}$ and
$x(I)\doteq0$ for all $I\not\in\distintRows{n}$.

Next we extend $x:\rows{n}\longrightarrow\sfA_1$ to tableaux as
$$
x\left(
\begin{array}{c}
I_1\\
I_2\\
\vdots\\
I_r\\
\end{array}
\right) \doteq x(I_1)x(I_2)\cdots x(I_r)\in\sfS^r\sfA_1
$$ We will call such a monomial \emph{standard} if the tableau is
standard. Notice that if this monomial is non zero (i.e. if and only
if $I_h\in\distintRows{n}$ for all $h=1,\ldots,r$) then we may always
consider all of its rows as standard up to a sign change.

Let $y(T)$ be the image of $x(T)$ in $\sfA_r$ and recall that, by a
well known result of Standard Monomial Theory, the set of monomials
$y(T)$ with $T$ a standard tableau of degree $r$ is a $\C$--basis of
$\sfA_r$. Moreover for each non standard tableau $\binom{I}{J}$ we
have a \emph{straightening relation}
$$
y\binom{I}{J}=\sum a_{H,K}y\binom{H}{K},\quad a_{H,K}\in\C
$$ where the sum runs over all standard tableaux $\binom{H}{K}$ with
$H\leq I,\,J$ and $K\geq I,\,J$ (see Corollary $1$ of \cite{Seshadri}). In the case of a minuscule module these properties are not difficult to prove and were the starting point of the standard monomial theory.

Notice
that for all $\binom{H}{K}$ we have $I\cup J=H\cup K$ with multiplicities, by $\got$--homogeneity. We may write the relation above also as $x\binom{I}{J}-\sum a_{H,K}x\binom{H}{K}\in\calK$; in
particular these elements (that we still call straithening relations)
generate the space $\calK$ of quadratic relations, hence they generate the ideal defining the ring $\sfA$.

Later we will see
direct formulas for what we call \emph{shuffling relation}: given a
non standard tableau $T=\binom{I}{J}$ as above, a shuffling relation
for $T$ is any element
$$
\sum_m a_m x\binom{H_m}{K_m},\quad a_m\in\C
$$
of $\calK$ such that $x\binom{I}{J}$ appears with coefficient $1$
and, for all $m$, $H_m\leq I,\, J$ and $K_m\geq I,\,J$. Notice that we
are \emph{not} requiring $\binom{H_m}{K_m}$ to be standard. Clearly
the straightening relation for $\binom{I}{J}$ can be obtained in a
finite number of steps using the shuffling relations, so also the shuffling relations generates $\calK$.

\subsection{Complementary invariance of relations}

In this section we want to prove a symmetry property of the ideal
defining a (general) flag variety, so here we allow $\lieg$ to be any
semisimple Lie algebra.

Fix a triangular decomposition $\gog=\gob^-\oplus\goh\oplus\gob$, a
corresponding set of simple roots $\alpha_1,\ldots,\alpha_\ell$ with
fundamental weights $\omega_1,\ldots,\omega_\ell$ and corresponding
Chevalley generators $e_1,\ldots,e_\ell$, $h_1,\ldots,h_\ell$ and
$f_1,\ldots,f_\ell$. For any dominant weight $\lambda$, let
$v_\lambda$ be a fixed highest weight vector of the irreducible
$\gog$--module $V_\lambda$ and let $v_\lambda^-$ be the lowest weight
vector in the Weyl group orbit of $v_\lambda$. Denote by
$\calK_\lambda$ the kernel of the $\gog$--module projection
$\sfS^2V_\lambda\longrightarrow V_{2\lambda}$; this kernel is the
direct sum of all isotypic components of $\sfS^2V_\lambda$ of weight
less than $2\lambda$.

Let $w_0$ be the longest element of the Weyl group of $\gog$ and
denote by $d$ the linear map of $\Lambda$ given by $-w_0$; hence, in
particular, $V_\lambda^*\simeq V_{d(\lambda)}$ for all dominant
weights $\lambda$. We still denote by $d$ the permutation of
$\{1,\ldots,\ell\}$ defined in the following way: $1\leq h \leq\ell$
is mapped to $k$ if $d(\omega_h)=\omega_k$. Further let
$a:\gog\longrightarrow\gog$ be the unique automorphism defined by
$e_i\longmapsto f_{d(i)}$, $h_i\longmapsto -h_{d(i)}$ and
$f_i\longmapsto e_{d(i)}$ for $i=1,\ldots,\ell$ and, finally, extend
it to the universal enveloping algebra $U(\gog)$.

It is clear that any element of $\sfS^2 V_\lambda$ may be written as
$\sum_{h=1}^r\varphi_h v_\lambda\cdot\psi_h v_\lambda$ where
$\varphi_h,\,\psi_h$ are in the universal enveloping algebra
$U(\gob^-)$ of $\gob^-$ for $h=1,\dots,r$.
\begin{lemma}\myLabel{lemmaComplementaryInvariance}
If the element $\sum_{h=1}^r\varphi_h v_\lambda\cdot\psi_h v_\lambda$
of $\sfS^2 V_\lambda$ is in $\calK_\lambda$ then also $\sum_{h=1}^r
a(\varphi_h)v^-_\lambda\cdot a(\psi_h)v^-_\lambda$ is an element of
$\calK_\lambda$.
\end{lemma}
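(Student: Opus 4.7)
The plan is to interpret the transformation $\sum \varphi_h v_\lambda \cdot \psi_h v_\lambda \mapsto \sum a(\varphi_h) v_\lambda^- \cdot a(\psi_h) v_\lambda^-$ as the symmetric square of a $\gog$-module isomorphism, and then to exploit the fact that $\calK_\lambda$ is insensitive to twisting the action of $\gog$ by $a$.

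First I would twist the $\gog$-action on $V_\lambda$: let $V_\lambda^a$ denote the vector space $V_\lambda$ equipped with the new action $x \cdot_a v \doteq a(x) v$. A short computation shows $V_\lambda^a$ is again irreducible of highest weight $\lambda$, with $v_\lambda^-$ in the role of the highest weight vector. Indeed, using $a(h_i) = -h_{d(i)}$ together with $d = -w_0$, a vector of original weight $\mu$ acquires twisted weight $w_0\mu$, so $v_\lambda^-$ (of original weight $w_0\lambda$) has twisted weight $\lambda$; and $a(e_i) v_\lambda^- = f_{d(i)} v_\lambda^- = 0$ by definition of lowest weight vector. Hence there is a unique $\gog$-module isomorphism $\Phi : V_\lambda \to V_\lambda^a$ sending $v_\lambda$ to $v_\lambda^-$, and $\gog$-equivariance forces $\Phi(\xi v_\lambda) = a(\xi) v_\lambda^-$ for all $\xi \in U(\gob^-)$. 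Passing to symmetric squares, the induced isomorphism $\sfS^2 \Phi : \sfS^2 V_\lambda \to \sfS^2 V_\lambda^a$ precisely realises the assignment in the statement.

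The crucial step, which I expect to be the delicate one, is to observe that $\calK_\lambda$ is the same subspace of $\sfS^2 V_\lambda$ whether we project onto $V_{2\lambda}$ using the original or the twisted $\gog$-action. Since $a(\gog) = \gog$, a subspace is $\gog$-stable for one action if and only if it is stable for the other; moreover, given any irreducible submodule isomorphic to $V_\nu$ under the original action, the twisted action turns it into $V_\nu^a$, which again has highest weight $\nu$ and is therefore isomorphic to $V_\nu$. Consequently the two isotypic decompositions of $\sfS^2 V_\lambda$ agree as direct sum decompositions of subspaces, and since $V_{2\lambda}$ appears with multiplicity one in $\sfS^2 V_\lambda$ (the Cartan component), its canonical complement $\calK_\lambda$ is independent of the action.

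Combining these observations, $\sfS^2 \Phi$ is a $\gog$-module isomorphism that sends the non-$V_{2\lambda}$ part of $\sfS^2 V_\lambda$ for the original action onto the non-$V_{2\lambda}$ part of $\sfS^2 V_\lambda^a$ for the twisted action, and both of these are $\calK_\lambda$ as subspaces. Evaluating $\sfS^2 \Phi$ on the element $\sum \varphi_h v_\lambda \cdot \psi_h v_\lambda \in \calK_\lambda$ yields the desired conclusion.
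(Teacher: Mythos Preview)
Your argument is correct and is essentially the paper's own proof: both twist $V_\lambda$ by $a$, identify $V_\lambda \cong V_\lambda^a$ via the map $v_\lambda \mapsto v_\lambda^-$, pass to $\sfS^2$ of this isomorphism, and observe that the Cartan component (hence $\calK_\lambda$) is the same subspace under either action. Your justification that the isotypic decompositions coincide is a slightly more explicit version of the paper's one-line remark that $V_{2\lambda}$ is sent to itself.
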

\begin{proof}
Given a $\gog$--module $V$, let $V^a$ be the $\gog$--module on the
vector space $V$ with action defined by $x\cdot v = a(x)v$ for all
$x\in\gog$ and $v\in V$. It is clear that $(\sfS^2 V)^a = \sfS^2 V^a$
and $(U\oplus V)^a=U^a\oplus V^a$ for all $\gog$--modules $U$ and $V$.

Notice that $v_\lambda^-$ is a highest weight vector of
$V_\lambda^a$, and its weight is
$a(w_0\lambda)=-dw_0(\lambda)=\lambda$; hence there exists a
$\gog$--module isomorphism $\alpha_\lambda:V_\lambda\longrightarrow
V_\lambda^a$ and we may normalize it by
$\alpha_\lambda(v_\lambda)=v_\lambda^-$.

Now consider the map $\sfS^2\alpha_\lambda$. Since
$V_{2\lambda}\subset\sfS^2 V_\lambda$ is sent to itself by
$\sfS^2\alpha_\lambda$, we see that
$\sfS^2\alpha_\lambda(\calK_\lambda)\subset\calK_\lambda$. So
$$
\begin{array}{rcl}
\sfS^2\alpha_\lambda(\sum_{h=1}^r\varphi_h v_\lambda\cdot\psi_h v_\lambda) & = & \sum_{h=1}^r\alpha_\lambda(\varphi_h v_\lambda)\cdot\alpha_\lambda(\psi_h v_\lambda)\\
 & = & \sum_{h=1}^r a(\varphi_h) v_\lambda^-\cdot a(\psi_h) v_\lambda^-\\
\end{array}
$$
is an element of $\calK_\lambda$ as claimed.
\end{proof}

Specializing to our context, and using the notation of previous
sections, we see that if $\varphi v_\lambda$, with $\varphi\in
U(\gob^-)$, is the vector $x(I)$, with $I\in\standardRows{n}$, then
$a(\varphi)v_\lambda^-$ is the vector $x(\widetilde{I})$ where
$\widetilde{I}$ is the complementary row of the row $I$. Hence we have
proved the following corollary.
\begin{corollary}\myLabel{corollaryComplementaryRows}
If the element $\sum_h a_h x\binom{I_h}{J_h}$, with $a_h\in\C$ and
$I_h,\,J_h\in\standardRows{n}$, is in the kernel $\calK$ of the
multiplication map $\sfS^2\sfA_1\longrightarrow\sfA_2$ then also
$\sum_h a_h x\binom{\widetilde{I_h}}{\widetilde{J_h}}$ is an element
of $\calK$.
\end{corollary}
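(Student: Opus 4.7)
The plan is to derive this corollary directly from Lemma \ref{lemmaComplementaryInvariance} applied with $\lambda = d(\omega_{n+1})$, the highest weight of $\spmo \cong \sfA_1$. Since $\sfA_2 \cong V_{2\lambda}$, the kernel $\calK$ agrees with $\calK_\lambda$ of the lemma. By irreducibility of $\spmo$, for each $I \in \standardRows{n}$ we may fix $\varphi_I \in U(\gob^-)$ with $x(I) = \varphi_I v_\lambda$; writing each $x\binom{I_h}{J_h}$ as $(\varphi_{I_h} v_\lambda) \cdot (\varphi_{J_h} v_\lambda)$ and invoking the lemma, the statement reduces to proving the identity $a(\varphi_I) v_\lambda^- = x(\widetilde I)$ for every standard row $I$.

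For this identity I would realise the isomorphism $\alpha_\lambda : V_\lambda \to V_\lambda^a$ from the proof of the lemma as the linear map $x(I) \mapsto x(\widetilde I)$. Inspection of the explicit action formulas shows that $x(12\cdots n)$ is the unique (up to scalar) vector in the basis annihilated by every $e_i$, so $v_\lambda = x(12\cdots n)$; the lowest weight vector in its Weyl orbit is $v_\lambda^- = x(\emptyRow) = x(\widetilde{12\cdots n})$, so the map sends $v_\lambda$ to $v_\lambda^-$. Once the map is known to be $\gog$-equivariant (when the target carries the twisted action $x \cdot v = a(x) v$), the required identity follows formally: $a(\varphi_I) v_\lambda^- = \alpha_\lambda(\varphi_I v_\lambda) = \alpha_\lambda(x(I)) = x(\widetilde I)$.

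The main task is therefore to check that for every Chevalley generator the action of $e_i$ on $x(I)$ is translated by complementation into the action of $f_{d(i)}$ on $x(\widetilde I)$, and analogously for $h_i$ and $f_i$. For $i \leq n-1$ the permutation $d$ fixes $i$, and the verification reduces to the identity $\widetilde{I(i+1 \to i)} = \widetilde{I}(i \to i+1)$ together with the obvious equivalence of the hypotheses $i+1 \in I,\ i \notin I$ and $i \in \widetilde I,\ i+1 \notin \widetilde I$. The only delicate case, and the main obstacle, is $i \in \{n, n+1\}$: the permutation $d$ swaps $n$ and $n+1$ precisely when $n$ is even, whereas complementation flips the parity of $|I|$ precisely when $n$ is odd. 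These two phenomena are exactly complementary, so the parity hypotheses in the formulas for $e_n, e_{n+1}$ translate correctly into those for $f_{d(n)}, f_{d(n+1)}$ in both parities of $n$. This establishes the intertwining, identifies $\alpha_\lambda$ with the complementation map by Schur's lemma, and completes the proof.
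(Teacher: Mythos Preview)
Your proposal is correct and follows exactly the paper's approach: the paper derives the corollary from Lemma~\ref{lemmaComplementaryInvariance} by asserting (without details) that if $\varphi v_\lambda = x(I)$ then $a(\varphi)v_\lambda^- = x(\widetilde I)$. You supply the verification the paper omits, correctly identifying $v_\lambda = x(12\cdots n)$, $v_\lambda^- = x(\emptyRow)$, and carefully handling the parity interplay between $d$ and complementation at the nodes $n,\,n+1$.
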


\section{Pfaffians}\myLabel{sectionPfaffians}

\subsection{Definition}

Let $\goX_n$ be the set of $n\times n$ antisymmetric matrices with complex coefficients. We identify $\goX_n$ with $\Lambda^2 \C^n$ mapping $X = (x_{ij})\in \goX_{n}$ to $\omega_X \doteq \tfrac{1}{2}\sum_{i,j}x_{ij}\,\gre_i \wedge\gre_j$. We recall that the \emph{pfaffian} of $X\in\goX_{2m}$, denoted by $\Pf(X)$, is defined by
$$
\frac{1}{m!}\, \omega_X^m = \Pf(X)\, \gre_1\wedge \dots \wedge\gre_{2m}.
$$
Notice that $\Pf(X)^2=\det X$ and, in particular, the pfaffian of $X$ vanishes when the matrix $X$ is singular. Moreover if we let $S_{2m}$ act on $\goX_{2m}$ by permuting the rows and the coloumns of a matrix as $\sigma\cdot (x_{i,j})=(x_{\sigma(i),\sigma(j)})$, then $\Pf(\sigma\cdot X)=(-1)^\sigma\Pf(X)$.

Now we see an example; it will be used in the proof of our theorem.

\begin{example}\myLabel{examplePfaffians} The pfaffian of the following antisymmetric matrix
$$
\left(
\begin{array}{cccccc}
 0  &  1 & 0 &   & \ldots & 0\\
-1  &  0 & 1 &   & \ldots & 0\\
 0  & -1 & 0 & 1 & \ldots & 0\\
    &    &   & \vdots\\
    &        &   & -1 &  0 & 1\\
 0  & \ldots &   &    & -1 & 0\\
\end{array}
\right)
$$
is $1$; clearly the same is true for all even dimensional principal submatrices of $M$.
\end{example}

Let $\sfB$ be the ring of polynomial functions on $\goX_{n+1}$. For each $X \in \goX_{n+1}$ and $I=i_1\cdots i_h \in \calR_{n+1}$ define $X_I$ to be the antisymmetric matrix given by the principal minor corresponding to the rows and the columns $i_1,\dots,i_h$ of $X$. In this definition we allow repetitions and we consider also the order of the elements in $I$; for example if
$$X =\left(
  \begin{smallmatrix}
     0 &  a & b \\
    -a &  0 & c \\
    -b & -c & 0
  \end{smallmatrix}
\right) \quad \text{ then } \quad X_{13} = \left(
  \begin{smallmatrix}
    0 & b \\
    -b &  0
  \end{smallmatrix}
\right) \quad \text{ while } \quad X_{31} = \left(
  \begin{smallmatrix}
     0 & -b \\
    b & 0
  \end{smallmatrix}
\right).
$$
For each $I \in \calR_n$ we define a (polynomial) function $\pf_I$ on $\goX_{n+1}$ by
$$
\pf_I(X) \doteq \Pf(X_{I^0}).
$$
The ordering of the elements of $I$ is not essential here since $\pf_I(X) = (-1)^{\tau_I}\pf_{I^\leq}(X)$ for all $I\in\distintRows{n}$; however it will be convenient for us to have this more general notation. Notice that $\pf_I=0$ for all sequences $I$ with a repetition since the pfaffian of a singular matrix is zero. It is also clear that the functions $\pf_I$ with $I\in\rows{n}$ generate the ring $\sfB$; indeed $\pf_{ij}(X)=x_{ij}$ for all $1\leq i<j\leq n$.

\subsection{Pfaffians and the spin module}\label{section32}
We now describe the relation between the pfaffians and the spin module. Notice that $\LGr$ has a unique $B^-$--stable divisor that we will denote with $\calS$. This is the variety of all subspaces $W\in \LGr$ with non trivial intersection with $V_2$. As a subvariety of $\LGr$ it is defined by the equation $x(\emptyRow)=0$.

Let now $\calp\in\LGr$ be the point corresponding to $V_1$. The $B^-$--orbit $\calU$ of $\calp$ is the complement of $\calS$. More precisely it is an orbit under the unipotent radical of the stabilizer $P$ of $\calp$. Define $u:\goX_{n+1}\lra G$ by $u(X)\doteq\left(\begin{smallmatrix} I & 0\\ X & I\end{smallmatrix}\right)$ and $\grf: \goX_{n+1} \lra \LGr$ by $\grf(X) \doteq u(X) \cdot\calp$. Then $\grf$ is an isomorphism between $\goX_{n+1}$ and $\calU$.

The pull back $\grf^*\calL$ of $\calL$ on $\goX_{n+1}$ is isomorphic to the trivial line bundle so it induces a ring homomorphism $\psi:\sfA\lra \sfB$. Notice that $\psi(x(\emptyRow))$ is a nowhere vanishing function so it is a non zero constant that we can normalize to be $1$. Hence $\psi$ induces an isomorphism
$$ \bar \psi: \frac{\sfA}{(x(\emptyRow)=1)} \isocan \sfB. $$
In particular, since $\calU$ is open in $\LGr$, the restriction $\psi_m = \psi|_{\sfA_m}$ to the homogenoeus component is injective and we define $\sfB_m\doteq\psi(\sfA_m)$. Since $\psi(x(\emptyRow)) = 1$ we have $\sfB_0\subset \sfB_1 \subset \sfB_2 \subset \cdots$. Notice also that the isomorphism $\psi$ does not define a $G$--structure on $\sfB$ however it defines a $G$--structure on $\sfB_m$ such that the multiplication maps $\sfB_m \times \sfB_{m'}$ in $\sfB_{m+m'}$. In particular we have the following commutative diagram
$$
\begin{CD}
  \sfA_1 \otimes \sfA_1 @>>> \sfA_2 \\
   @V{\psi_1\otimes\psi_1}VV @V{\psi_2}VV \\
  \sfB_1 \otimes \sfB_1 @>>> \sfB_2
\end{CD}
$$
where vertical maps are isomorphisms and horizontal ones are induced by multiplication. Hence the homogenous relations between elements of $\sfA_1$ and $\sfB_1$ are the same.

We want to identify in a more explicit way $\sfB_1$; in order to do this we need to make explicit the trivialization of the line bundle $\calL$. If  $\grs : G \lra \C$ is a meromorphic function such that  $\grs(gp^{-1}) = \omega_{n+1}^{-1}(p)\grs(g)$ for all $p\in P$ then it defines naturally a meromorphic section of $\calL$. We can associate to $\grs$ also a function on $\goX_{n+1}$ by $f_\grs(X)\doteq\grs(u(X))$.

On the other hand every meromorphic function on $\goX_{n+1}$ can be constructed in this way. So we obtain an action of the Lie algebra $\gog$ on the space of meromorphic functions on $\goX_{n+1}$. Explicitely this action
is given as follows:
\begin{align*}
  (\left(\begin{array}{cc}A & 0\\0 & -^tA\end{array}\right) \cdot f) (X) &= \frac{\mtd\;}{\mtd s} f(X+s (A^t\,X+X\,A))
      \big|_{s=0} - \tfrac 12 \Tr (A) f(X) \\
  (\left(\begin{array}{cc}0 & B\\0 & 0\end{array}\right)\cdot f) (X) &= \frac{\mtd\;}{\mtd s} f(X+s XBX)
      \big|_{s=0} - \tfrac 12 \Tr (BX) f(X) \\
  (\left(\begin{array}{cc}0 & 0\\C & 0\end{array}\right)\cdot f) (X) &= \frac{\mtd\;}{\mtd s} f(X-sC) \big|_{s=0} \\
\end{align*}
for all $A\in \End(\C^{n+1})$ and $B,C\in \goX_{n+1}$.

In particular with simple computations we get that if $I\in \standardRows{n}$:
\begin{align*}
 e_i (\pf_I) &=
 \begin{cases}
   \pf_{e_i(I)} &\mif e_i(I)\neq 0,\\
   0 &\text{ otherwise};
 \end{cases}\\
h_i(\pf_I)&=\langle h_i,\wt(I)\rangle\pf_I.
\end{align*}
Since we have normalized $\psi$ in such a way that $\psi(x(\emptyRow))=1=\pf_\emptyRow$ we have
$$
\psi(\pf_I)=x(I)
$$
for all $I\in\standardRows{n}$. So, by our conventions, $\psi(\pf_I)=x(I)$ for all rows $I$ and not only for standard rows. Notice that, in particular, $\sfB_1$ is the vector space spanned by the functions $\pf_I$ with $I\in \standardRows{n}$.

So we have proved the following result.
\begin{proposition}\myLabel{propositionPfaffianWeightVector}
The element $\sum_h a_h x\binom{I_h}{J_h}$ of $\sfS^2 A_1$ is in $\calK$ if and only if $\sum_h a_h\pf_{I_h}\pf_{J_h}=0$ as a polynomial function on $\goX_{n+1}$.
\end{proposition}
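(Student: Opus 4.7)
The plan is to leverage the commutative diagram already set up in Section \ref{section32}, together with the injectivity statement $\psi_m:\sfA_m\hookrightarrow\sfB$ that follows from the fact that $\calU$ is Zariski open in $\LGr$. Everything has essentially been prepared; the proposition is really a bookkeeping consequence of the established facts.

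Concretely, I would first recall that $\psi_1:\sfA_1\to\sfB_1$ is an isomorphism sending $x(I)\mapsto\pf_I$ for every row $I$ (not just standard ones, by the sign conventions adopted). Consequently the induced symmetric square map $\mathsf{S}^2\psi_1:\mathsf{S}^2\sfA_1\to\mathsf{S}^2\sfB_1$ is also an isomorphism, and sends
$$
\sum_h a_h\, x(I_h)\cdot x(J_h)\ \longmapsto\ \sum_h a_h\, \pf_{I_h}\cdot \pf_{J_h}.
$$
The image of the right-hand side under multiplication $\sfB_1\otimes\sfB_1\to\sfB_2\subset\sfB$ is, by definition of multiplication in the ring of polynomial functions, the polynomial function $X\mapsto\sum_h a_h\,\pf_{I_h}(X)\,\pf_{J_h}(X)$ on $\goX_{n+1}$.

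Next I would invoke the commutative diagram
$$
\begin{CD}
\sfA_1\otimes\sfA_1 @>>> \sfA_2\\
@VV\psi_1\otimes\psi_1V @VV\psi_2V\\
\sfB_1\otimes\sfB_1 @>>> \sfB_2
\end{CD}
$$
so that if $\xi=\sum_h a_h\, x(I_h)\cdot x(J_h)\in\mathsf{S}^2\sfA_1$ and $\mu(\xi)\in\sfA_2$ denotes its image under multiplication, then $\psi_2(\mu(\xi))$ equals the polynomial function above. Since $\psi_2$ is injective, we obtain $\mu(\xi)=0$ (i.e.\ $\xi\in\calK$) if and only if $\sum_h a_h\,\pf_{I_h}\,\pf_{J_h}=0$ as a polynomial function on $\goX_{n+1}$, which is exactly the claim.

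There is no genuine obstacle here; the only mildly delicate point is keeping the sign conventions straight so that $\psi(x(I))=\pf_I$ holds for arbitrary (possibly non-standard) rows $I$, which has already been arranged by defining both $x(I)\doteq(-1)^{\tau_I}x(I^\leq)$ and $\pf_I=(-1)^{\tau_I}\pf_{I^\leq}$ with the same sign convention, so that the equality extends from standard rows to all of $\distintRows{n}$, and both sides vanish for rows with repeated entries.
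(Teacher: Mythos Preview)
Your argument is correct and is precisely the approach the paper intends: the proposition is stated in the paper immediately after the commutative diagram and the identification $\psi(x(I))=\pf_I$ with the words ``So we have proved the following result,'' and your write-up simply makes explicit the chase through that diagram using the injectivity of $\psi_2$. There is nothing to add.
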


\subsection{The action of the symmetric group and submatrices}

The previous identification of the weight vectors of the positive spin module and the pfaffians allows us to prove the following invariance properties. These will be used in the proof of our main result.

\begin{lemma}\myLabel{lemmaPermutationInvariance}
If $\sum_h a_h x\binom{I_h}{J_h}$ is an element of $\calK$, with $a_h\in\C$, and $\sigma\in S_n$ then $\sum_h a_h x\binom{\sigma\cdot I_h}{\sigma\cdot J_h}$ is in $\calK$ too.
\end{lemma}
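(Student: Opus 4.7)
The plan is to translate the statement about $\calK$ into a statement about polynomial identities on $\goX_{n+1}$ via Proposition \ref{propositionPfaffianWeightVector}, and then exploit the fact that the $S_n$-action on rows matches up with the natural $S_n$-action on $\goX_{n+1}$ by simultaneous row/column permutation. So my first step would be to extend $\sigma\in S_n$ to an element of $S_{n+1}$ fixing $n+1$, and let it act on $\goX_{n+1}$ by $\sigma\cdot(x_{ij})=(x_{\sigma(i),\sigma(j)})$ as in the discussion preceding Example \ref{examplePfaffians}.

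Next I would verify the compatibility formula
\[
\pf_{\sigma\cdot I}(X)\;=\;\pf_I(\sigma\cdot X)\qquad\text{for all }X\in\goX_{n+1},\ I\in\rows{n}.
\]
This is essentially bookkeeping. Since $\sigma$ fixes $n+1$, we have $(\sigma\cdot I)^0=\sigma\cdot I^0$ both when $|I|$ is even (where $I^0=I$) and when $|I|$ is odd (where $I^0=I*(n+1)$). Then the principal submatrix $X_{(\sigma\cdot I)^0}$ obtained by selecting the rows and columns indexed by $\sigma\cdot I^0$ coincides with the principal submatrix $(\sigma\cdot X)_{I^0}$, because both matrices have $(k,l)$-entry $x_{\sigma(i_k),\sigma(i_l)}$ where $I^0=i_1\cdots i_t$. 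Taking pfaffians gives the displayed identity.

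With this identity in hand, the rest is immediate. By Proposition \ref{propositionPfaffianWeightVector}, the hypothesis $\sum_h a_h\,x\binom{I_h}{J_h}\in\calK$ is equivalent to the polynomial identity $\sum_h a_h\,\pf_{I_h}(X)\pf_{J_h}(X)=0$ for all $X\in\goX_{n+1}$. Substituting $\sigma\cdot X$ in place of $X$ and using the compatibility formula twice yields
\[
\sum_h a_h\,\pf_{\sigma\cdot I_h}(X)\,\pf_{\sigma\cdot J_h}(X)\;=\;\sum_h a_h\,\pf_{I_h}(\sigma\cdot X)\,\pf_{J_h}(\sigma\cdot X)\;=\;0
\]
for every $X$. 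Applying Proposition \ref{propositionPfaffianWeightVector} in the reverse direction then gives $\sum_h a_h\,x\binom{\sigma\cdot I_h}{\sigma\cdot J_h}\in\calK$, as required.

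The only delicate point — and it is really only a sanity check — is the compatibility formula: one must be careful that appending $n+1$ for odd-length rows commutes with the $\sigma$-action, which is exactly why we extend $\sigma$ to $S_{n+1}$ by fixing $n+1$. Once that is observed, there is no genuine obstacle; the argument is just transport of structure via the action of $S_n\subset S_{n+1}$ on $\goX_{n+1}$.
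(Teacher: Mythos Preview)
Your proof is correct and follows essentially the same route as the paper: both arguments use Proposition~\ref{propositionPfaffianWeightVector} to convert the assertion into a pfaffian identity on $\goX_{n+1}$, then observe that relabelling the rows and columns by a permutation of $\{1,\dots,n\}$ (extended to fix $n+1$) carries one identity to the other. Your write-up is simply more explicit about the compatibility formula $\pf_{\sigma\cdot I}(X)=\pf_I(\sigma\cdot X)$ and the sanity check for odd-length rows, which the paper leaves implicit in the phrase ``change the enumeration of the rows and columns''.
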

\begin{proof}
Indeed $\sum a_h x\binom{I_h}{J_h}$ is an element of $\calK$ if and only if $\sum a_h\pf_{I_h}\pf_{J_h}=0$ by Proposition \ref{propositionPfaffianWeightVector}. If we change the enumeration of the rows and coloumns of an antisymmetric $(n+1)\times(n+1)$ matrix from $1,2,\ldots,n+1$ to $\sigma(1),\sigma(2),\ldots,\sigma(n),n+1$ the same relation holds. But in terms of the original enumeration this relation is $\sum a_h\pf_{\sigma\cdot I_h}\pf_{\sigma\cdot J_h}=0$; hence our claim follows using again Proposition \ref{propositionPfaffianWeightVector}.
\end{proof}

\begin{lemma}\myLabel{lemmaRestrictionInvariance}
If $f\doteq\sum_h a_h x\binom{I_h}{J_h}$ is an element of $\calK_n$, with $a_h\in\C$, and $I_h,J_h\subset\{1,\ldots,n-1\}$ for all $h$, then we may consider $f$ as an element of $\calK_{n-1}$. On the converse any relation in $\calK_{n-1}$ may be considered as an element of $\calK_n$.
\end{lemma}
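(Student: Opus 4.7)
The plan is to reduce both directions to a polynomial identity via Proposition \ref{propositionPfaffianWeightVector}, which translates membership in $\calK$ into the vanishing of the corresponding combination of pfaffians. The point is that when every row $I_h, J_h$ consists of entries in $\{1,\ldots,n-1\}$, the pfaffians involved in the identity live, up to a harmless renaming of one variable block, in both $\sfB_{n+1}$ and $\sfB_n$ simultaneously.

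First I would note the following compatibility between ranks. For any row $I$ with entries in $\{1,\ldots,n-1\}$, the extended row $I^0$ either equals $I$ (if $|I|$ is even) or is $I*(n+1)$ in the rank-$n$ setting and $I*n$ in the rank-$(n-1)$ setting (if $|I|$ is odd). In neither case does $I^0$ contain the index $n$ when computed for rank $n$, so the principal minor in question uses only rows/columns in $\{1,\ldots,n-1,n+1\}$. Hence $\pf_I$ as a function on $\goX_{n+1}$ depends only on the variables $x_{ij}$ with $i,j\in\{1,\ldots,n-1,n+1\}$. Under the obvious identification $x_{ij}\leftrightarrow x_{ij}$ for $i,j<n$ and $x_{i,n+1}\leftrightarrow x_{in}$, this is exactly the polynomial $\pf_I$ viewed on $\goX_n$ in the rank-$(n-1)$ setting; this is a direct inspection, since the two pfaffians are computed on the same underlying principal minor after the relabelling.

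Given this, the forward direction is immediate: if $f\in\calK_n$, then by Proposition \ref{propositionPfaffianWeightVector} we have $\sum_h a_h \pf_{I_h}\pf_{J_h}=0$ identically on $\goX_{n+1}$; restricting to the closed subscheme $\{x_{in}=0\text{ for all }i\}\simeq\goX_n$ and using the identification above yields $\sum_h a_h \pf_{I_h}\pf_{J_h}=0$ on $\goX_n$, so $f\in\calK_{n-1}$ by the same proposition applied in rank $n-1$. Conversely, if $f\in\calK_{n-1}$ then $\sum_h a_h \pf_{I_h}\pf_{J_h}=0$ holds on $\goX_n$, i.e.\ as an identity in the polynomial ring in the variables $x_{ij}$ with $i,j\in\{1,\ldots,n\}$; via the identification $x_{in}\leftrightarrow x_{i,n+1}$ this is the same polynomial identity inside $\sfB_{n+1}$ (trivially independent of the remaining variables $x_{in}$ of $\goX_{n+1}$), hence $f\in\calK_n$.

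There is no real obstacle: the statement is a transparency property of the pfaffian-to-spin-module dictionary of \S\ref{section32}. The only bookkeeping issue is that the padding index in $I^0$ changes from $n$ to $n+1$ when one passes from rank $n-1$ to rank $n$, but these two conventions are precisely reconciled by the variable relabelling $x_{in}\leftrightarrow x_{i,n+1}$ used above.
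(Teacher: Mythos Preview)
Your argument is correct and is essentially the same as the paper's: both translate the question to a pfaffian identity via Proposition~\ref{propositionPfaffianWeightVector} and then observe that, since every row lies in $\{1,\ldots,n-1\}$, the pfaffians involved use only the indices $\{1,\ldots,n-1,n+1\}$ in rank $n$, which matches the rank-$(n-1)$ pfaffians after the relabelling $n+1\leftrightarrow n$. The paper phrases this relabelling as changing the enumeration of rows and columns, while you phrase it as a variable substitution $x_{i,n+1}\leftrightarrow x_{in}$, but these are the same move.
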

\begin{proof}
The element $f\in\calK_n$ corresponds to the relation $\sum a_h\pf_{I_h}\pf_{J_h}=0$ in terms of pfaffians. These pfaffians involve the rows and coloumns $1,2,\ldots,n-1,n+1$ by hypothesis. Now we change the enumeration from $1,2,\ldots,n-1,n,n+1$ to $1,2,\ldots,n-1,n+1,n$. The same relation is still true with the new enumeration; but it is obtained by completing each odd length row $I$ by adding $n$ and not $n+1$ before computing the pfaffian $\pf_I$. Hence this last relation is the same we obtain if we consider $f$ has an element of the ring $\sfA$ for $n-1$ instead of $n$.

The second claim is analogously proved using again Proposition \ref{propositionPfaffianWeightVector}.
\end{proof}

If $I=i_1\ldots i_r$ is a row and $1\leq s\leq n$ an integer, let $j_s(I)$ be the row $i_1\cdots i_h si_{h+1}\cdots i_r$ where $h$ is the maximal index such that $i_t\leq s$ for all $t=1,\ldots,h$. Notice that $j_s(I)$ is standard if $I$ is standard and $s\not\in I$.

Let $d_s(I)$ be the row obtained by deleting any occurrence of $s$ in $I$. We have that $d_s(I)$ is standard if $I$ is standard.

Further we define $j_s$, $d_s$ on weight vectors by $j_s(x(I))\doteq x(j_s(I))$, $d_s(x(I))\doteq x(d_s(I))$ respectively, for all standard rows $I$; clearly $j_sx(I)=0$ if and only if $s\in I$. We extend $j_s$ and $d_s$ from $\sfA_1$ to $\sfS^*\sfA$ as algebra homomorphisms.

\begin{lemma}\myLabel{lemmaInsertDeleteKernel}
Let $f\doteq\sum_h a_h x\binom{I_h}{J_h}$ be an element of $\calK$ with $I_h,\,J_h$ standard rows for all $h$. If $s\not\in I_h\cup J_h$ for all $h$, then $j_s(f)\in\calK$. If $s\in I_h\cap J_h$ for all $h$, then $d_s(f)\in\calK$.
\end{lemma}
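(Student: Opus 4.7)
The plan is to prove the second statement (about $j_s$) first, via a Lie-algebraic reduction to the case $s=n$, and then derive the first statement (about $d_s$) from it by complementation (Corollary \ref{corollaryComplementaryRows}).

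For the $j_s$ statement, I would begin by observing that $\calK$ is stable under $\got$, so $f$ decomposes into $\got$--weight components; each is still in $\calK$, still has rows avoiding $s$, and $j_s$ is linear, so it suffices to treat the case of a $\got$--homogeneous $f$ of some weight $\lambda$. The next step is to use the transposition $\sigma=(s,n)\in S_n$ (trivial if $s=n$) to reduce to the case $s=n$: by Lemma \ref{lemmaPermutationInvariance}, $\sigma\cdot f\in\calK$, and since $s\notin I_h\cup J_h$, every row of $\sigma\cdot f$, once standardized, avoids $n$. The key Lie-algebraic input is the element $E\doteq e_n+e_{n+1}\in\gog$. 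The explicit action formulas of Section \ref{sectionSpinModule} show that for any standard row $I$ with $n\notin I$, exactly one of $e_n,e_{n+1}$ (depending on the parity of $|I|$) sends $x(I)$ to $x(I*n)$ while the other vanishes; if $n\in I$ both are zero. Viewing $E$ as a derivation on $\sfS^2\sfA_1$, a direct computation on tableaux $\binom{I}{J}$ with $n\notin I\cup J$ yields $\tfrac{1}{2}\,E^2\bigl(x\binom{I}{J}\bigr)=x\binom{I*n}{J*n}=j_n\bigl(x\binom{I}{J}\bigr)$. Since $\calK$ is a $\gog$--submodule of $\sfS^2\sfA_1$, $E^2$ preserves $\calK$, so $j_n(\sigma\cdot f)=\tfrac{1}{2}E^2(\sigma\cdot f)\in\calK$. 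Applying $\sigma$ once more, and invoking Lemma \ref{lemmaPermutationInvariance} again, gives $\sigma\cdot j_n(\sigma\cdot f)\in\calK$.

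The hard part will be tracking the signs introduced by the two $\sigma$--standardizations. A direct calculation for a single row $I$ with $s\notin I$ shows that the combined sign contribution equals $(-1)^{|I\cap\{s+1,\dots,n\}|}$, so $\sigma\cdot j_n(\sigma\cdot f)=\sum_h \eta_h\,a_h\,x\binom{j_s(I_h)}{j_s(J_h)}$ with $\eta_h=(-1)^{|I_h\cap\{s+1,\dots,n\}|+|J_h\cap\{s+1,\dots,n\}|}$. The crucial observation, where the weight-homogeneity of $f$ is used, is that for every $i\in\{1,\dots,n\}$ the integer $[i\in I_h]+[i\in J_h]$ is determined by $\lambda$ alone (since $\lambda_i=[i\in I_h^0]+[i\in J_h^0]-1$ and $i\neq n+1$), independently of $h$; summing over $i\in\{s+1,\dots,n\}$ shows that $\eta_h$ equals a single sign $\eta_\lambda$. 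Therefore $\sigma\cdot j_n(\sigma\cdot f)=\eta_\lambda\,j_s(f)\in\calK$, and hence $j_s(f)\in\calK$.

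For the $d_s$ statement, if $s\in I_h\cap J_h$ then $s\notin\widetilde{I_h},\widetilde{J_h}$; Corollary \ref{corollaryComplementaryRows} gives $\widetilde f\in\calK$, and the $j_s$ case applied to $\widetilde f$ yields $j_s(\widetilde f)\in\calK$. A second application of Corollary \ref{corollaryComplementaryRows}, combined with the set-theoretic identity $\widetilde{j_s(\widetilde I)}=I\setminus s=d_s(I)$, finally gives $d_s(f)\in\calK$.
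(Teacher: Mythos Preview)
Your argument is correct. The $E=e_n+e_{n+1}$ trick works exactly as you describe (indeed the paper itself uses this operator later, in Step~5 of the proof of Theorem~\ref{theoremMain}): for $n\notin I$ one has $E\cdot x(I)=x(I*n)$ regardless of parity, and $E\cdot x(I)=0$ if $n\in I$, so $\tfrac12 E^2$ does send $x\binom{I}{J}$ to $x\binom{I*n}{J*n}$ when $n\notin I\cup J$. Your sign computation and the weight-homogeneity reduction are also right: the coefficient of $\gre_i$ in $\wt(I)+\wt(J)$ determines $[i\in I]+[i\in J]$ for each $i\leq n$, so the parity $\eta_h$ is constant across a weight component. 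The deduction of the $d_s$ case from the $j_s$ case via two applications of Corollary~\ref{corollaryComplementaryRows} and the identity $\widetilde{j_s(\widetilde I)}=d_s(I)$ is clean. (A tiny slip: in the lemma the $j_s$ assertion is the \emph{first} statement, not the second.)

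The paper's own proof takes a different and more symmetric route. It introduces, for each subset $\Delta\subset\{1,\dots,n\}$, the ``relative complement'' map $c_\Delta$ sending $x(I)$ (with $I\subset\Delta$) to $x(\Delta\setminus I)$, and observes---using Corollary~\ref{corollaryComplementaryRows} together with Lemmas~\ref{lemmaPermutationInvariance} and~\ref{lemmaRestrictionInvariance}---that each $c_\Delta$ preserves $\calK$. Then both operators are expressed as compositions of two such complements: with $\Delta_1=\{1,\dots,n\}\setminus\{s\}$ and $\Delta_2=\{1,\dots,n\}$ one has $j_s=c_{\Delta_2}\circ c_{\Delta_1}$ and $d_s=c_{\Delta_1}\circ c_{\Delta_2}$. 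This treats $j_s$ and $d_s$ uniformly and avoids all the sign bookkeeping and the weight-homogeneity reduction, at the price of invoking Lemma~\ref{lemmaRestrictionInvariance} (the restriction-to-a-smaller-rank lemma), which your approach does not need. Your argument, by contrast, is more hands-on and exhibits $j_n$ directly as an element of the enveloping algebra acting on $\sfS^2\sfA_1$; this is conceptually satisfying but makes the reduction from general $s$ to $s=n$ do real work.
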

\begin{proof}
Given a subset $\Delta$ of $\{1,2,\ldots,n\}$ and a standard row $I\subset\Delta$ let $c_\Delta(I)$ be the standard row complementary to $I$ in $\Delta$. We define $c_\Delta$ on weight vectors by $c_\Delta(x(I))\doteq x(c_\Delta(I))$ for all $x(I)$ such that $I$ is a standard row contained in $\Delta$; further we extend it as an algebra homomorphism from the subalgebra $\sfA_\Delta$ of $\sfA$ generated by $x(I)$ with $I\subset\Delta$ to $\sfA$.

By Corollary \ref{corollaryComplementaryRows}, Lemma \ref{lemmaPermutationInvariance} and Lemma \ref{lemmaRestrictionInvariance} we have: if $f\in\sfA_\Delta\cap\calK$ then $c_\Delta(f)\in\calK$.

Now let $\Delta_1\doteq\{1,2,\ldots,s-1,s+1,\ldots,n\}$, $\Delta_2\doteq\{1,2,\ldots,n\}$. If $I$ is a standard row and $s\not\in I$ then $j_s(I)=c_{\Delta_2}c_{\Delta_1}(I)$ and hence our first claim follows.

The second claim is analogous; indeed if $s\in I$ then $d_s(I)=c_{\Delta_1}c_{\Delta_2}(I)$.
\end{proof}

\section{Shuffling relations}\myLabel{sectionShufflingRelations}

\subsection{Shuffling polynomial}

We say that a tableau is $r$--\emph{standard} if all rows are standard, the entries of $T$ do not decrease along the first $r$ columns but the same is not true in the $(r+1)$--th column. If morever $T$ has two rows then we may write it as $T=\binom{IJ}{HK}$ where $I=i_1i_2\cdots i_r$, $J=j_1j_2\cdots j_s$, $H=h_1h_2\cdots h_{r+1}$ and $K=k_1 k_2\cdots k_t$ with $s>t$ and with the following inequalities
$$
T = \left(
\begin{array}{cccccccccccccccc}
i_1 & < & i_2 & < & \cdots & < & i_r & < & j_1 & < & j_2 & < & \cdots & \cdots & < & j_s\\
\verleq & & \verleq & & & & \verleq & & \vergreater \\
h_1 & < & h_2 & < & \cdots & < & h_r & < & h_{r+1} & < & k_1 & < & \cdots & < & k_t\\
\end{array}
\right)
$$
We call this subdivision of the two rows of $T$ its \emph{canonical} form. Notice in particular that we have a chain of strict inequalities from $h_1$ to $j_s$, hence there is no repetition in $HJ$.

However, despite the name, a standard tableau is not $r$--standard for any $r$.

\begin{definition}\myLabel{definitionShufflingPolynomial}
Given an $r$--standard tableau $T=\binom{IJ}{HK}$ in canonical form, we define the \emph{shuffling polynomial} of $T$ as the following element of $S^2\sfA_1$:
$$
\Theta(T) \doteq \sum(-1)^{\frac{h(h+1)}{2}}\gbin{|J\setminus K|+h}{h}{-1}\gre(H'J')\gre(Z_2K'_1)\,x\binom{IJ'K'_1}{H'K'_2}
$$
with $h\doteq|H|-|H'|$, $Z_1=Z_1(T)\doteq I\cap H$ and $Z_2=Z_2(T)\doteq J\cap K$ as ordered rows and the sum running over the set $\calI(T)$ of all $(J',H',K'_1,K'_2)\in\standardRows{n}^{\times 4}$ such that
\begin{enumerate}
\item[(1)] $|H'|\leq |H|$,
\item[(2)] $H'\sqcup J' = H\sqcup J$,
\item[(3)] $K'_1\sqcup K'_2 = K$,
\item[(4)] $Z_1\subset H'$,
\item[(5)] $Z_2\subset J'\cap K'_2$.
\end{enumerate}
For short let
$$
\alpha(J',H',K'_1,K'_2)\doteq(-1)^{\frac{h(h+1)}{2}}\gbin{|J\setminus K|+h}{h}{-1}\gre(H'J')\gre(Z_2K'_1).
$$
\end{definition}

\begin{remark}\myLabel{remarkShufflingRelation}
Notice that $T$ appears in $\Theta(T)$ with coefficient $1$ and each tableau in $\Theta(T)$ but $T$ fulfills the conditions of a straightening relation for $T$.
\end{remark}

\subsection{Combinatorial properties of shuffling polynomials}

In the next purely combinatorial lemma we prove that the shuffling polynomial of a tableau does not change if we insert or remove a common entry in the rows.

\begin{lemma}\myLabel{lemmaInsertDeleteTheta} Let $T$ be a non standard tableau with two standard rows.
\begin{enumerate}
\item If $T$ does not contain $s$, then $j_s(T)$ is not standard and $j_s\big(\Theta(T)\big) = \Theta\big(j_s(T)\big)$.
\item If $s$ is contained in both rows of $T$, then $d_s(T)$ is not standard and $d_s(\Theta(T))=\Theta(d_s(T))$.
\end{enumerate}
\end{lemma}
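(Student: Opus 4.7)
The plan is to exhibit, for each claim, an explicit bijection between the indexing sets $\calI(T)$ and $\calI(j_s(T))$ (respectively $\calI(d_s(T))$) under which the monomials match after applying $j_s$ (respectively $d_s$) and the coefficients $\alpha$ are preserved. First I would check non-standardness: writing $T = \binom{IJ}{HK}$ in canonical form, the column $r{+}1$ carries the strict inequality $j_1 > h_{r+1}$, and inserting or deleting a common element $s$ in both rows can only shift the index of this violating column, never collapse its strict inequality; so $j_s(T)$ and $d_s(T)$ are still non-standard and $\Theta$ is defined on them.

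For part (1), since $s \not\in T$, the element $s$ enters row 1 at a unique position (in the $I$-block or the $J$-block according as $s < j_1$ or $s > j_1$) and similarly for row 2, determining the canonical form $j_s(T) = \binom{\tilde I \tilde J}{\tilde H \tilde K}$. I would define $\Phi\colon \calI(T) \to \calI(j_s(T))$ by inserting $s$ only into those components of the quadruple $(J',H',K'_1,K'_2)$ that correspond to the blocks into which $s$ lands: the new $H'$ receives $s$ if $s$ enters $\tilde H$; the unique $K'_i$ whose entries sit on the appropriate side of $s$ in $K$ receives $s$ if $s$ enters $\tilde K$; $J'$ receives $s$ if $s$ enters $\tilde J$. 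A routine case check shows that conditions (1)--(5) transport along $\Phi$, that $\Phi$ is a bijection, and that $j_s\bigl(x\binom{IJ'K'_1}{H'K'_2}\bigr)$ is exactly the monomial indexed by $\Phi(J',H',K'_1,K'_2)$ in $\Theta(j_s(T))$.

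Part (2) is parallel in spirit. Since $s$ now lies in both rows of $T$, it belongs to one of the four blocks on each side, and conditions (4)--(5) already force $s$ into prescribed components of every quadruple (for instance $s \in Z_1$ forces $s \in H'$, and $s \in Z_2$ forces $s \in J' \cap K'_2$); the prescribed deletion of $s$ from those components defines a bijection $\calI(T) \to \calI(d_s(T))$ with the same matching of monomials.

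The main obstacle is the preservation of the coefficient $\alpha$ under the bijection. The integer $h = |H| - |H'|$ is automatically preserved because $s$ is always inserted or deleted simultaneously in both $H$ and $H'$. The quantity $|J \setminus K|$ is preserved in the generic case in which $s$ enters both $J$ and $K$; the remaining sub-cases require a short bookkeeping argument using $K'_1 \sqcup K'_2 = K$ and $Z_2 \subset J' \cap K'_2$. The delicate core of the proof is the verification of the signs: $\gre(H'J')$ and $\gre(Z_2 K'_1)$ may each pick up a factor as $s$ is slid past a block of existing entries, and one must show these changes cancel because they measure the same number of transpositions in the common concatenated row. Checking this cancellation across the finitely many positional sub-cases of where $s$ falls relative to the entries of $I, J, H, K$ is where the technical weight of the argument lies.
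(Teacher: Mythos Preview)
Your approach is the same as the paper's---build a bijection $\calI(T)\to\calI(j_s(T))$ and check that both the monomials and the coefficients $\alpha$ match---but your execution misses a structural simplification and contains one incorrect step.

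The paper observes that the canonical form forces a clean dichotomy: since $i_r<h_{r+1}<j_1$, either $s<h_{r+1}$ (and then $s$ enters \emph{both} $I$ and $H$, making $j_s(T)$ $(r{+}1)$--standard with $\tilde J=J$, $\tilde K=K$) or $s>h_{r+1}$ (and then $s$ enters \emph{both} $J$ and $K$, keeping $j_s(T)$ $r$--standard with $\tilde I=I$, $\tilde H=H$). There are no mixed sub-cases, so your worry about $|J\setminus K|$ in ``remaining sub-cases'' is unfounded: it is always preserved. More importantly, your rule ``the unique $K'_i$ whose entries sit on the appropriate side of $s$'' is not well-defined---$K'_1,K'_2$ form an arbitrary partition of $K$, not a left/right split. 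The correct target is always $K'_2$: in the case $s>h_{r+1}$ one has $s\in j_s(J)\cap j_s(K)=Z_2(j_s(T))$, and condition~(5) then forces $s\in K'_2$ for every quadruple in $\calI(j_s(T))$. With this correction the bijection is $(J',H',K'_1,K'_2)\mapsto(j_s(J'),H',K'_1,j_s(K'_2))$ in the second case and $(J',H',K'_1,K'_2)\mapsto(J',j_s(H'),K'_1,K'_2)$ in the first, and the sign bookkeeping reduces to two short computations rather than ``finitely many positional sub-cases''.

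For part~(2) the paper does not repeat the bijection argument: it simply applies part~(1) to $\overline T=d_s(T)$, obtaining $j_s(\Theta(\overline T))=\Theta(j_s(\overline T))=\Theta(T)$, and then hits both sides with $d_s$ using $d_sj_s=\Id$. Your direct approach would also work (and your observation that $s\in Z_1$ or $s\in Z_2$ pins down where $s$ sits in each quadruple is exactly right), but it duplicates effort.
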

\begin{proof} We prove the first claim. Assume that $T=\binom{IJ}{HK}$ is $r$--standard in canonical form. Consider first the case $s<h_{r+1}$. Then $j_s(T)$ is $(r+1)$--standard, $j_s(T)= \binom {j_s(I)\, J}{j_s(H)\, K}$ in canonical form and the map
$$
(J',H',K_1',K_2') \mapsto (J',j_s(H'),K_1',K_2')
$$
gives a bijection from $\calI(T)$ to $\calI(j_s(T))$.

Let $f$ be an addend in $\Theta(T)$, we compare $j_s(f)$ with the corresponding term in $\Theta(j_s(T))$ under this bijection. Clearly we have
\begin{itemize}
\item[(i)] $|j_s(H)| - |j_s(H')| = |H| - |H'|$,
\item[(ii)] $J\senza K$ is the same set for $T$ and for $j_s(T)$ and
\item[(iii)] $Z_2$ and $K_1'$ do not change in this bijection.
\end{itemize}
Let $k$ be the number of elements of $J'$ smaller than $s$. Then we have
$$
\begin{array}{rcl}
\gre(j_s(H')J') & = & (-1)^k \gre(H'J') \\
\gre(j_s(I)J'K_1') & = & (-1)^k \gre(IJ'K_1') \\
\gre(j_s(H')K_2') & = & \gre(H'K_2')
\end{array}
$$
Hence
\begin{align*}
\gre(j_s(H')\, J') \, x\binom{j_s(I)J'K'_1}{j_s(H')K'_2}
& =
\gre(j_s(H')J')
\gre(j_s(I)J'K_1')
\gre(j_s(H')K_2')
\,
x\binom{(j_s(I)J'K'_1)^\leq}{(j_s(H')K'_2)^\leq} \\
& =
\gre(H'J')
\gre(IJ'K_1')
\gre(H'K_2')
x\binom{j_s((IJ'K'_1)^\leq)}{j_s((H'K'_2)^\leq)} \\
&=\gre(H'J')
\gre(IJ'K_1')
\gre(H'K_2')
j_s\bigg(x\binom{(IJ'K'_1)^\leq}{(H'K'_2)^\leq}\bigg) \\
&=\gre(H'J') j_s\bigg(x\binom{IJ'K'_1 }{ H'K'_2 }\bigg)
\end{align*}
which implies that the two terms we are considering are equal. This proves $j_s(\Theta(T)) = \Theta(j_s(T))$.

Assume now that $s>h_{r+1}$. Then $j_s(T)$ is $r$--standard, $j_s(T)= \binom {I\, j_s(J)}{H\, j_s(K)}$ in canonical form and the map
$$
(J',H',K_1',K_2') \mapsto (j_s(J'),H',K_1',j_s(K_2'))
$$
gives a bijection from $\calI(T)$ to $\calI(j_s(T))$. As above we compare the corresponding terms in $\Theta(T)$ and $\Theta(j_s(T))$ under this bijection. We have: $j_s(J)\senza j_s(K) = J\senza K$ while $|H|-|H'|$ does not change in this bijection. Let $k$ be the number of elements of $H'$ bigger than $s$ and $m$ be the number of elements of $K_1'$ smaller than $s$. Then we have
$$
\begin{array}{rcl}
\gre(H'j_s(J')) & = & (-1)^k \gre(H'J') \\
\gre(Ij_s(J')K_1') & = & (-1)^m \gre(IJ'K_1') \\
\gre(H'j_s(K_2')) & = & (-1)^k \gre(H'K_2') \\
\gre(j_s(Z_2) K_1') & = & (-1)^m \gre(Z_2 K_1') \\
\end{array}
$$
which, as above, implies that the two corresponding terms are equal and hence $j_s(\Theta(T)) = \Theta(j_s(T))$.

Now we see how the second claim follows from the first one. Indeed let $T$ be a non standard tableau containing $s$ in both rows as in the second claim and let $\overline{T}\doteq d_s(T)$. Notice that $j_s(\overline{T})=j_sd_s(T)=T$ since $T$ has standard rows, hence $\overline{T}$ can not be standard otherwise also $T=j_s(\overline{T})$ should be standard. So we may apply the first claim to $\overline{T}$ and find $j_s(\Theta(\overline{T}))=\Theta(j_s(\overline{T}))=\Theta(T)$. Apply $d_s$ to both sides of this identity and notice that $d_s j_s=\Id$ to conclude $\Theta(d_sT)=d_sj_s\Theta(\overline{T})=d_s\Theta(T)$ as claimed.
\end{proof}

Now we see another combinatorial property of the shuffling polynomials. We want to prove that if we permute the entries of a tableau with a permutation satisfing certain assumptions then the shuffling polynomial may change only by the sign.

Let us start by stating these assumptions of compatibilities between an $r$--standard tableau $T=\binom{IJ}{HK}$ in canonical form and a permutation $\sigma\in S_n$. Given a row $I$ let $I^\sigma$ be the row $(\sigma\cdot I)^\leq$ and let $T^\sigma\doteq\binom{(IJ)^\sigma}{(HK)^\sigma}$. We say that $\sigma$ is \emph{compatible} with $T$ if $T^\sigma$ is again $r$--standard with canonical form $\binom{I^\sigma J^\sigma}{H^\sigma K^\sigma}$ and $K^\sigma=\sigma\cdot K$. In particular notice that in this case we have $Z_1(T^\sigma)=Z_1(T)^\sigma$ and $Z_2(T^\sigma)=Z_2(T)^\sigma$.

\begin{lemma}\myLabel{lemmaSigmaTheta}
If $T$ is $r$--standard and $\sigma$ is $T$--compatible then $\sigma \cdot \Theta(T)=\pm\Theta(T^\sigma)$.
\end{lemma}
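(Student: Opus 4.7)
The plan is to reduce the identity to a term-by-term comparison via an explicit bijection. Define
$$\Phi:\calI(T)\longrightarrow\calI(T^\sigma),\qquad (J',H',K_1',K_2')\longmapsto(J'^\sigma,H'^\sigma,K_1'^\sigma,K_2'^\sigma),$$
and check its well-definedness against conditions (1)--(5) of Definition \ref{definitionShufflingPolynomial}. Conditions (1)--(2) transfer because $\sigma$ is a bijection of $\{1,\ldots,n\}$ and $|H'^\sigma|=|H'|$; condition (3) uses the $T$-compatibility hypothesis $K^\sigma=\sigma\cdot K$, which guarantees that sorted partitions of $K$ push forward to sorted partitions of $K^\sigma$; conditions (4)--(5) follow from the identities $Z_i(T^\sigma)=Z_i(T)^\sigma$ for $i=1,2$, which in turn come from the canonical-form hypothesis built into the definition of $T$-compatibility.

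Under $\Phi$ the statistics $h=|H|-|H'|$ and $|J\setminus K|$ are preserved, so the factor $(-1)^{h(h+1)/2}\gbin{|J\setminus K|+h}{h}{-1}$ matches on the two sides. Using the identity $\sigma\cdot x(R)=x(\sigma\cdot R)$ valid for every row $R$ with distinct entries (a direct check from $x(R)=\gre(R)x(R^\leq)$ and the multiplicative rule $\gre(\sigma R)=\gre(R)\gre(\sigma R^\leq)$), together with the shuffle decomposition $\gre(AB)=\gre(A)\gre(B)\rho(A^\leq,B^\leq)$ where $\rho$ denotes the merge-shuffle sign, the identity $\sigma\cdot\Theta(T)=\pm\Theta(T^\sigma)$ is reduced to showing that for every quadruple $q\in\calI(T)$ the coefficient of $x\binom{I^\sigma J'^\sigma K_1'^\sigma}{H'^\sigma K_2'^\sigma}$ in $\sigma\cdot\Theta(T)$ and in $\Theta(T^\sigma)$ differ by a single sign $\delta(\sigma,T)\in\{\pm 1\}$ that does not depend on $q$.

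The main obstacle is this sign analysis. Expanding the ratio produces a product of $\gre$-factors of two kinds: the explicit $\gre(H'J')$, $\gre(Z_2 K_1')$ in $\alpha_T(q)$ together with their $T^\sigma$-counterparts, and the internal sorting signs $\gre(\sigma J'),\gre(\sigma H'),\gre(\sigma K_1'),\gre(\sigma K_2')$ that arise when the $\sigma$-transformed monomial is rewritten in the standard basis. Three features of $T$-compatibility drive the cancellation of all $q$-dependent pieces: (i) $K^\sigma=\sigma\cdot K$ forces $\gre(\sigma K)=1$, so the internal signs on $K_1',K_2'$ combine into the shuffle sign pairing $\sigma K_1'$ with $\sigma K_2'$ inside $\sigma K$; (ii) the canonical-form equalities $(IJ)^\sigma=I^\sigma J^\sigma$ and $(HK)^\sigma=H^\sigma K^\sigma$ ensure the shuffle producing $(H'J')^\sigma$ from $\sigma H',\sigma J'$ coincides, up to a $q$-independent factor, with the corresponding sub-shuffle inside $(HJ)^\sigma$; (iii) the containments $Z_1\subset H'$ and $Z_2\subset J'\cap K_2'$ align the remaining shuffle signs so they cancel in pairs. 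What survives is a constant $\delta(\sigma,T)\in\{\pm 1\}$ depending only on $\sigma$ and $T$, yielding the claim. The delicacy lies in the fact that each individual $\gre$-factor is itself a composite shuffle sign and the $q$-independence emerges only in the aggregate, precisely matched to the $T$-compatibility hypothesis.
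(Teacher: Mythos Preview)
Your overall strategy coincides with the paper's: the same bijection $\Phi:\calI(T)\to\calI(T^\sigma)$, the same observation that $h$ and $|J\setminus K|$ are preserved so the Gaussian-binomial factor matches, and the same reduction to a sign comparison. The difference is that the sign comparison is the entire substance of the lemma, and you have described it rather than done it. Your points (i)--(iii) name the ingredients but never produce an explicit identity; the claim that ``the $q$-independence emerges only in the aggregate, precisely matched to the $T$-compatibility hypothesis'' is an assertion, not an argument.

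The paper carries out the computation directly. By writing down an explicit chain of permutations taking $H'^\sigma J'^\sigma$ to its sorted form $H^\sigma J^\sigma$ (through $(\sigma\cdot H')(\sigma\cdot J')$, then $H'J'$, then $HJ$, then $(\sigma\cdot H)(\sigma\cdot J)$) and reading off the sign, one obtains
\[
\gre(H'^\sigma J'^\sigma)=\gre(\sigma\cdot H')\,\gre(\sigma\cdot J')\,\gre(H'J')\,\gre(\sigma\cdot H)\,\gre(\sigma\cdot J).
\]
Separately, the hypothesis $\sigma\cdot K=K^\sigma$ gives $\sigma\cdot K_i'=K_i'^\sigma$ and hence $\gre(Z_2K_1')=\gre(Z_2^\sigma K_1'^\sigma)$ directly. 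Substituting these two facts into the term-by-term comparison yields the global sign $\gre(\sigma\cdot I)\,\gre(\sigma\cdot J)\,\gre(\sigma\cdot H)$, which is visibly independent of $(J',H',K_1',K_2')$. So the $q$-independence is not an aggregate phenomenon at all: it is immediate once the displayed identity is in hand. Your sketch is compatible with this, but to count as a proof you need to actually derive that identity (or an equivalent one) rather than gesture at shuffle decompositions.
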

\begin{proof}
First notice that the map
$$
(J',H',K_1',K_2') \longmapsto (J'^\sigma,H'^\sigma,K_1'^\sigma,K_2'^\sigma)
$$
gives a bijection from $\calI(T)$ to $\calI(T^\sigma)$.

If $(H',J',K_1',K_2')$ is an element of $\calI(T)$, consider the following sequence of transformations which reorder the row $H'^\sigma J'^\sigma$
$$
\begin{CD}
H'^\sigma\;J'^\sigma@>{\tau_{\grs\cdot H'}^{-1},\tau_{\grs\cdot J'}^{-1}}>>
(\grs\cdot H')(\grs\cdot J') @>{\grs^{-1}}>>
H'\,J' @>{\tau_{H'\,J'}}>>
H\,J @>{\grs}>> \\
 @>{\grs}>>(\grs\cdot H)(\grs\cdot J) @>{\tau_{\grs\cdot H},\tau_{\grs\cdot J}}>>H^\sigma\,J^\sigma @>{=}>> (H'^\sigma\,J'^\sigma)^\leq.
\end{CD}
$$
So, using that the sign of $\sigma\tau_{H'\, J'}\sigma^{-1}$ is that of $\tau_{H'\, J'}$, we obtain
\begin{equation*}
  \gre(H'^\sigma\,J'^\sigma) = \gre(\grs\cdot H') \gre(\grs\cdot J') \gre(H'J')\gre(\grs\cdot H)\gre(\grs\cdot J).
\end{equation*}
Moreover by $\grs\cdot K = K^\sigma$ we have  $\grs\cdot K'_1 = K_1'^\sigma$, $\grs\cdot K'_2 = K_2'^\sigma$ and also $\gre(Z_2 K_1')=\gre(Z_2^\sigma K_1'^\sigma)$, hence
\begin{align*}
\gre(H'J')\gre(Z_2 K_1')\: \grs \cdot x\binom{IJ' K_1'}{H'K_2'} &= \gre(H'J')\gre(Z_2 K_1')\gre(\grs\cdot I)\gre(\grs\cdot J')\gre(\grs\cdot H')
x\binom{I^\sigma\;J'^\sigma\; K_1'^\sigma}{H'^\sigma\;K_2'^\sigma} \\
& =\gre(\grs\cdot I)\gre(\grs\cdot J)\gre(\grs\cdot H) \gre(H'^\sigma\;J'^\sigma)\gre(Z_2^\sigma K_1'^\sigma)
x\binom{I^\sigma\;J'^\sigma\; K_1'^\sigma}{H'^\sigma\;K_2'^\sigma}.
\end{align*}
Set $\gre \doteq\gre(\grs\cdot I)\gre(\grs\cdot J)\gre(\grs\cdot H)$ and notice that $|J'|=|J'^\sigma|$ and $|H| - |H'|=|H^\sigma| - |H'^\sigma|$. From the above identities we conclude
\begin{align*}
\grs \cdot \Theta (T)
& = \sum _{(H',J',K_1',K_2')\in \calI(T)} \gra(H',J',K_1',K_2') \; \grs \cdot x\binom{IJ' K_1'}{H'K_2'} \\
& = \sum _{(H',J',K_1',K_2')\in \calI(T)} \gre \; \gra(H'^\sigma,J'^\sigma,K_1'^\sigma,K_2'^\sigma) \;  x\binom{I^\sigma\;J'^\sigma\;K_1'^\sigma}{H'^\sigma K_2'^\sigma} \\
& = \gre \; \Theta(T^\sigma)
\end{align*}
proving our claim.
\end{proof}

\section{Proof of the shuffling relations}\myLabel{sectionRelations}

In this section we prove our main result.

\begin{theorem}\myLabel{theoremMain}
If $T$ is a non standard tableau with two standard rows, then $\Theta(T)$ is an element of $\calK$.
\end{theorem}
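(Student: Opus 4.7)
The plan is to translate the statement $\Theta(T) \in \calK$ into a polynomial identity among pfaffians via Proposition \ref{propositionPfaffianWeightVector}, reduce to a canonical base case by exploiting the invariance properties of $\calK$ and of $\Theta$ established in Sections \ref{sectionSpinModule} and \ref{sectionPfaffians}, and verify the identity in that base case using Lemma \ref{lemmaGaussianBinomials}. The first step is the reduction to $Z_1 = Z_2 = \emptyset$. If some integer $s$ appears in both rows of $T$, set $\bar T \doteq d_s(T)$. From the conditions $H' \sqcup J' = H \sqcup J$ and $K_1' \sqcup K_2' = K$ defining $\calI(\bar T)$, no tableau occurring in $\Theta(\bar T)$ contains $s$; moreover $j_s(\bar T) = T$ because $s$ lies in both rows of $T$. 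Hence Lemma \ref{lemmaInsertDeleteTheta}(1) yields $\Theta(T) = j_s(\Theta(\bar T))$, and Lemma \ref{lemmaInsertDeleteKernel} guarantees that $j_s$ preserves membership in $\calK$ in this setting; iterating on the number of common entries, it is therefore enough to treat the case in which the two rows of $T$ share no entry.

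In that reduced setting, by Proposition \ref{propositionPfaffianWeightVector} the desired membership $\Theta(T) \in \calK$ is equivalent to the polynomial identity
$$
\sum_{(J',H',K_1',K_2') \in \calI(T)} \alpha(J',H',K_1',K_2')\, \pf_{IJ'K_1'} \, \pf_{H'K_2'} = 0
$$
as functions on $\goX_{n+1}$. I would evaluate this sum by grouping its terms first by the partition $K = K_1' \sqcup K_2'$ and then by $h \doteq |H|-|H'|$. For each fixed $(K_1', K_2')$, after expanding each pfaffian as a signed sum over perfect matchings and collecting the coefficient of a chosen monomial in the entries $x_{ij}$, the residual sum on $h$ has the form $\sum_h (-1)^{h(h+1)/2} \gbin{|J \setminus K|+h}{h}{-1} \cdot (\text{combinatorial factor independent of } h)$. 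Since $(-1)^{h(h+1)/2}$ is exactly the value of $(-1)^h q^{h(h-1)/2}$ at $q=-1$, this is the specialization at $q=-1$ of the identity of Lemma \ref{lemmaGaussianBinomials}, so every such coefficient vanishes.

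The main obstacle is the combinatorial bookkeeping in this last step: one must verify that the signs $\gre(H'J')$, $\gre(Z_2 K_1')$, together with the reordering signs implicit in the normalization $x(I) = (-1)^{\tau_I} x(I^\leq)$, combine exactly with the $(-1)^{h(h+1)/2}$ factor to produce, after collecting like monomials in the $x_{ij}$, precisely the sum of Lemma \ref{lemmaGaussianBinomials}, with the right choice of the parameters $m$ and $s$ appearing there. A possible alternative, which avoids part of this bookkeeping, is an induction on $|K|$: the base case $K = \emptyset$ is verified directly, while the inductive step isolates the last entry of $K$ either through a Laplace-type pfaffian expansion or through the explicit Chevalley action on pfaffians recorded in Section \ref{section32}, thereby reducing the identity for $T$ to one for a tableau with strictly smaller $K$ that is already known to lie in $\calK$.
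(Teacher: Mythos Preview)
Your reduction to $Z_1=Z_2=\varnothing$ via $d_s$ and $j_s$ is correct and coincides with the paper's Step~2, and the induction on $|K|$ you mention as an alternative is exactly the paper's Step~5. The gap lies in the core computation you propose for the base case.

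Your assertion that, after expanding the pfaffians and collecting like monomials in the $x_{ij}$, the coefficient of each monomial has the shape $\sum_h (-1)^{h(h+1)/2}\gbin{|J\setminus K|+h}{h}{-1}\cdot(\text{factor independent of }h)$ with that factor matching Lemma~\ref{lemmaGaussianBinomials} is not substantiated, and in fact does not hold for an arbitrary monomial. For Lemma~\ref{lemmaGaussianBinomials} (at $q=-1$) to apply, the remaining factor must itself be a gaussian binomial $\gbin{m}{s-h}{-1}$ for fixed $m,s$. In the paper this occurs only after evaluating at the single matrix $M$ of Example~\ref{examplePfaffians}, where every pfaffian equals $1$: then the sum $\sum_{|H'|=|H|-h}\gre(H'J')$ runs over \emph{all} subsets $H'\subset H\sqcup J$ of the given size and equals the Poincar\'e polynomial of $S_{|H|+|J|}/(S_{|H'|}\times S_{|J'|})$ at $-1$, hence $\gbin{|H|+|J|}{|H|-h}{-1}$ by Remark~\ref{remarkGaussianBinomial}. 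For the coefficient of a generic monomial, the $H'$ that actually contribute are constrained by the support of that monomial, and the resulting partial sum of signs has no reason to be a gaussian binomial in $h$. So the ``bookkeeping'' you flag is not merely tedious; the mechanism you describe does not produce the identity of Lemma~\ref{lemmaGaussianBinomials} monomial by monomial.

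The idea the paper supplies instead, and which your outline is missing, is to compare $\Theta(T)$ (for $K=\varnothing$) with the straightening relation $\Delta(T)$, whose existence and membership in $\calK$ are guaranteed a priori by standard monomial theory. One proves $\alpha(J',H')=\delta(J',H')$ coefficient by coefficient: first for all $(J',H')$ with $r+1\in H'$, by applying $d_r e_r$ and invoking the inductive hypothesis on $n$ (this is where the reduction of Step~1, which you omitted, is used); then one propagates to all $H'\neq\varnothing$ using that $e_i\Delta(T)\in\calK$ forces $\delta$ to be alternating under swaps $i\leftrightarrow i+1$. This leaves a single undetermined scalar, the coefficient of $x\binom{IHJ}{\varnothing}$, and \emph{only this one number} is computed by evaluating at $M$ and invoking Lemma~\ref{lemmaGaussianBinomials}. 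Your induction on $|K|$ then finishes the argument, but it cannot start without this representation-theoretic treatment of the base case.
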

\begin{proof}
Assume that $T$ is $r$--standard, $T=\binom{IJ}{HK}$ in canonical form and let $R_1\doteq IJ$ and $R_2\doteq HK$ be its first and second row respectively.

{\tt Step 1.} If $R_1\cup R_2\neq\{1,2,\ldots,n\}$ then there exists $\sigma\in S_n$ such that $n\not\in\sigma\cdot T$ and $\sigma(i)<\sigma(j)$ for any pair $1\leq i<j\leq n$ in $T$. The permutation $\sigma$ is clearly $T$--compatible and, in particular $T^\sigma=\sigma\cdot T$, so $\Theta(\sigma\cdot T)=\pm\sigma\cdot\Theta(T)$ by Lemma \ref{lemmaSigmaTheta}.

Using induction on $n$ we may suppose that $\Theta(\sigma\cdot T)\in\calK_{n-1}$; but then $\Theta(\sigma\cdot T)\in\calK_n$ by Lemma \ref{lemmaRestrictionInvariance}. Hence $\Theta(T)=\pm\sigma^{-1}\cdot\Theta(\sigma\cdot T)\in\calK_n$ by Lemma \ref{lemmaPermutationInvariance} and our claim is proved.

The inductive base step is automatically true since for $n=1$ there are no non standard tableaux.

{\tt Step 2.} Now we proceed by induction on $|R_1\cap R_2|$. Suppose $s\in R_1\cap R_2$ and let $\overline{T}\doteq d_s(T)$. Since $s\not\in\overline{T}$ we have $\Theta(\overline{T})\in\calK$ by {\tt Step 1}. Hence $j_s(\Theta(\overline{T}))\in\calK$ by Lemma \ref{lemmaInsertDeleteKernel}. But $j_s(\Theta(\overline{T}))=\Theta(j_s(\overline{T}))$ by Lemma \ref{lemmaInsertDeleteTheta} and moreover $j_s(\overline{T})=T$ since $T$ has standard rows. This proves our claim.

In the following steps we assume that $R_1$ and $R_2$ do not intersect proving the induction base.

{\tt Step 3.} Now we show that it suffices to prove our claim for a particular tableau. Indeed let $I=i_1i_2\cdots i_r$, $J=j_1j_2\cdots j_s$, $H=h_1h_2\cdots h_{r+1}$, $K=k_1k_2\cdots k_t$, with $2r+s+t+1=n$ since we are assuming $R_1\cup R_2=\{1,\ldots,n\}$ and $R_1\cap R_2=\varnothing$.

By $R_1\cap R_2=\varnothing$ we deduce that there exists (a unique) $\sigma\in S_n$ such that $\sigma(i_u)=u$ for $u=1,\ldots,r$, $\sigma(h_u)=u+r$ for $u=1,\ldots,r+1$, $\sigma(j_u)=u+2r+1$ for $u=1,\ldots,s$ and $\sigma(k_u)=u+2r+s+1$ for $u=1,\ldots,t$. It is clear that
$$
T^\sigma=\sigma\cdot T=T^0\doteq\left(
\begin{array}{ccccccccc}
1 & 2 & \cdots & r & 2r+2 & 2r+3 & \cdots & \cdots & 2r+s+1\\
r+1 & r+2 & \cdots & 2r & 2r+1 & 2r+s+2 & \cdots & n\\
\end{array}
\right).
$$
In particular $T^\sigma$ is $r$--standard and $\sigma$ is $T$--compatible. Hence $\Theta(T)=\sigma^{-1}\cdot\Theta(T^0)$ by Lemma \ref{lemmaSigmaTheta} and we see that if we prove $\Theta(T^0)\in\calK$ then $\Theta(T)\in\calK$ by Lemma \ref{lemmaPermutationInvariance}.

So in the sequel we assume $T=T^0$.

{\tt Step 4.} In this step we prove our claim for $K=\varnothing$. For short we write $(J',H')$ instead of $(J',H',\varnothing,\varnothing)$.

Notice that in
$$
\Theta(T)=\sum_{(J',H')\in\calI(T)}\alpha(J',H')x\binom{IJ'}{H'}
$$
only $T$ is non standard. Let $\Delta(T)$ be the unique element of $\sfS^2\sfA_1$ corresponding to the straightening relation for $T$. Each tableau $\binom{R_1'}{R_2'}$ in $\Delta(T)$ verifies $R_1'\leq R_1,R_2$ and $R_2'\geq R_1,R_2$; using this it is easy to see that there exists $(J',H')\in\calI(T)$ such that $R_1'=IJ'$ and $R_2'=H'$. So we may write
$$
\Delta(T)=\sum_{(J',H')\in\calI(T)}\delta(J',H')x\binom{IJ'}{H'}
$$
for some coefficients $\delta(J',H')\in\C$. If we show that $\alpha(J',H')=\delta(J',H')$ for all $(J',H')\in\calI(T)$ then we have $\Theta(T)=\Delta(T)\in\calK$ and our claim is proved.

{\tt Step 4.1.} Now we want to prove that $\alpha(J',H')=\delta(J',H')$ for all $(J',H')\in\calI(T)$ with $r+1\in H'$. We compare $d_r e_r \Theta(T)$ with $d_r e_r \Delta(T)$. Let
$$
\overline{T}=\binom{d_r(I)J}{d_{r+1}(H)}=\left(
\begin{array}{cccccccc}
1 & 2 & \cdots & r-1 & 2r+2 & 2r+3 & \cdots & n\\
r+2 & r+3 & \cdots & 2r & 2r+1\\
\end{array}
\right).
$$
Using the definition of $d_r$ and $e_r$ we have
$$
d_re_r\Theta(T)=\sum_{(J',H')\in\calI(T),\,r+1\in H'}\alpha(J',H')x\binom{d_r(I)J'}{d_{r+1}(H')}.
$$
The map $(J',H')\longmapsto(J',d_{r+1}(H'))$ is a bijection from $\calI(T)$ and $\calI(\overline{T})$ (with inverse $(J',H')\longmapsto(J',j_{r+1}(H'))$). Moreover, adding the corresponding tableau as superscript for clarity, we have $$
\alpha^{\overline{T}}(J',d_{r+1}(H'))=(-1)^{\frac{\overline{h}(\overline{h}+1)}{2}}\gbin{n-2r-1+\overline{h}}{\overline{h}}{-1}\gre((H'\setminus r+1)J')
$$
with $\overline{h}=|d_{r+1}(H)|-|d_{r+1}(H')|=|H|-|H'|$ and also $\gre((H'\setminus r+1)J')=\gre(H'J')$ since $r+1\in H'J'$ is the minimum element. Hence $\alpha^{\overline{T}}(J',d_{r+1}(H'))=\alpha^T(J',H')$; this proves that $d_re_r\Theta(T)=\Theta(\overline{T})$.

But then $d_re_r\Theta(T)=\Theta(\overline{T})\in\calK$ by {\tt Step 1} since $r\not\in\overline{T}$. Notice that $\overline{T}$ is the unique non standard tableau in $\Theta(\overline{T})$, so $d_re_r\Theta(T)$ is the element $\Delta(\overline{T})$ of $\sfS^2\sfA_1$ corresponding to the straightening relation of $\overline{T}$ since it is an element of $\calK$ and the coefficient of $\overline{T}$ is $1$ in both elements.

Consider now $d_re_r\Delta(T)$. By the definitions
$$
d_re_r\Delta(T)=\sum_{(J',H')\in\calI(T),\,r+1\in H'}\delta(J',H')x\binom{d_r(I)J'}{d_{r+1}(H')}.
$$
Hence also in $d_re_r\Delta(T)$ the unique non standard tableau is $\overline{T}$ and it appears with coefficient $1$. But $\calK$ is closed by $e_r$ since it is a $\lieg$--module and it is closed by $d_r$ by Lemma \ref{lemmaInsertDeleteKernel} so $d_re_r\Delta(T)=\Delta(\overline{T})$ being the straightening relation for $\overline{T}$ unique.

Hence we have showed that $d_re_r\Theta(T)=\Delta(\overline{T})=d_re_r\Delta(T)$; in particular $\alpha(J',H')=\delta(J',H')$ for all $(J',H')\in\calI(T)$ with $r+1\in H'$ that is our claim for this step.

{\tt Step 4.2.} Our next claim is now $\alpha(J',H')=\delta(J',H')$ if $H'\not\neq\varnothing$. Indeed let $r+1\leq i\leq n$ and consider $e_i\Delta(T)$. We find at once
$$
\begin{array}{rcl}
e_i\Delta(T) & = & \sum_{(J',H')\in\calI(T), i\in H',i+1\in J'}\delta(J',H')x\binom{I,J'(i+1\longmapsto i)}{H'}+\\
 & & \sum_{(J',H')\in\calI(T), i+1\in H',i\in J'}\delta(J',H')x\binom{IJ'}{H'(i+1\longmapsto i)}.
\end{array}
$$
In particular it is easy to see that all tableaux in $e_i\Delta(T)$ are standard; but this is an element of $\calK$, hence $e_i\Delta(T)=0$. Each tableau in $e_i\Delta(T)$ is obtained in exactly two ways: replacing $i+1$ by $i$ in the first row or in the second row. So we have proved that $\delta(J'',H'')=-\delta(J',H')$ if $i$ and $i+1$ appear in different row in $\binom{IJ'}{H'}$ and $\binom{IJ''}{H''}$ is obtained from $\binom{IJ'}{H'}$ by swapping $i$ and $i+1$.

Since each tableau $\binom{IJ'}{H'}$ with $H'\neq\varnothing$ is reached by a finite number of swaps of $i,i+1$ with $r+1\leq i\leq n$ from a tableau containing $r+1$ in the second row, we have $\alpha(J',H')=\delta(J',H')$ for all tableaux with $H'\neq\varnothing$ using the result of the previous step.

{\tt Step 4.3.} So $\Theta(T)-\Delta(T)=c\cdot x\binom{IHJ}{\varnothing}$ and we want to show $c=0$. Let $M$ be the $(n+1)\times(n+1)$ antisymmetric matrix of Example \ref{examplePfaffians} whose all pfaffians are $1$ and identify the elements of $\sfS^*\sfA_1$ with the corrisponding polynomials as in the Proposition \ref{propositionPfaffianWeightVector}. So $c=\Theta(T)(M)$ since $\Delta(T)(M)=0$ by $\Delta(T)\in\calK$ and $x\binom{IHJ}{\varnothing}(M)=1$. We want to show that $\Theta(T)(M)=0$ proving $c=0$ and $\Theta(T)=\Delta(T)$.

We have
$$
\begin{array}{rcl}
\Theta(T)(M) & = & \sum_{(J',H')\in\calI(T)}(-1)^{\frac{h(h+1)}{2}}\gbin{n-2r-1+h}{h}{-1}\gre(H'J')\\
 & = & \sum_{h=0}^{|H|}(-1)^{\frac{h(h+1)}{2}}\gbin{n-2r-1+h}{h}{-1}\cdot\sum_{H'\subset HJ,\, |H'|=|H|-h}\gre(H'J')\\
\end{array}
$$
Notice that by the definition of $\gre$ on rows we have $\sum_{H'\subset HJ,\, |H'|=|H|-h}\gre(H'J')=\sum_\tau(-1)^\tau$, where the sum runs on the set of minimal rapresentatives of the quotient $S_{|H|+|J|}/S_{|H|-h}\times S_{|J|+h}=S_{n-r}/S_{r+1-h}\times S_{n-2r-1+h}$. Hence the previous sum is the Poincar\'e polynomial of this quotient evaluated in $-1$, so $\sum_{H'\subset HJ,\, |H'|=|H|-h}\gre(H'J')=\gbin{n-r}{r+1-h}{-1}$ by Remark \ref{remarkGaussianBinomial}. We find
$$
\Theta(T)(M) = \sum_{h=0}^{r+1}(-1)^{\frac{h(h+1)}{2}}\gbin{n-2r-1+h}{h}{-1}\gbin{n-r}{r+1-h}{-1}\\
$$
and this is zero by Lemma \ref{lemmaGaussianBinomials} with $s=|H|=r+1$, $m=|H|+|J|=n-r$ and $q=-1$.

This finishes the proof that $\Theta(T)\in\calK$ for all non standard tableau $T$ with $K=\varnothing$.

{\tt Step 5.} In this final step we prove that our claim for generic $K$ follows by the case $K=\varnothing$ of the previous steps. Let $T=\binom{IJ}{HK}$ be as in the conclusion of {\tt Step 3}; $I=12\cdots r$, $J=2r+2\cdots 2r+s+1$, $H=r\cdots 2r+1$ and $K=2r+s+2\cdots n$.

We want to proceed by induction on $|K|$. Indeed, for $u=0,\ldots,|K|$ let $T_u\doteq \binom{IJ}{H,K_u}$, where $K_u$ is the standard row containing the first $u$ entries of $K$, and notice that the base inductive step, i.e. $\Theta(T_0)\in\calK$, has already been proved.

Now suppose that $\Theta(T_u)\in\calK$ for $0<u<|K|$ and let $k\doteq 2r+s+1+u$ be the last entry of $K_u$. By Lemma \ref{lemmaRestrictionInvariance} we have $\Theta(T_u)\in\calK_k\subset\calK_{k+1}$, so we apply $e\doteq e_{k+1}+e_{k+2}$ to $\Theta(T_u)$. By the definition of $e_{k+1}$ and $e_{k+2}$ (for $n=k+1$) we see that the operator $e$ produce two tableaux from each tableau in $\Theta(T_u)$ by adding the entry $k+1$ in the first and the second row.
\noindent It is clear that
$$
\alpha^{T_{u+1}}(J',H',j_{k+1}(K_1'),K_2')=\alpha^{T_{u+1}}(J',H',K_1',j_{k+1}(K_2'))=\alpha^{T_u}(J',H',K_1',K_2')
$$
by the definition. Moreover $\calI(T_{u+1})$ is the set of tableaux obtained from $\calI(T_u)$ by applying $e$. So we conclude $\Theta(T_{u+1})=e\cdot\Theta(T_u)\in\calK_{k+1}\subset\calK$ and this finishes the proof of the theorem.
\end{proof}

So we may state our result in terms of shuffling relations.

\begin{corollary}\label{corollaryMain}
For any non standard tableau $T$ with two standard rows, $\Theta(T)$ is a shuffling relation for $T$; the set of all such shuffling relations generate the kernel of the multiplication map $\sfS^2\sfA_1\longrightarrow\sfA_2$ and the ideal of relations defining $\LGr$ in $\mP(V_{\omega_{n+1}}^*)$.
\end{corollary}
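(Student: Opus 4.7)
The plan is a sequence of reductions that use the symmetries of $\calK$ and of $\Theta$ established in Sections \ref{sectionPfaffians}--\ref{sectionShufflingRelations} to collapse the general theorem to a single tightly constrained base case, which is then verified by direct evaluation on an explicit matrix.

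First I would eliminate unused and shared indices. If $R_1 \cup R_2 \subsetneq \{1,\ldots,n\}$, a $T$-compatible $\sigma \in S_n$ shifts $T$ into $\{1,\ldots,n-1\}$; Lemma \ref{lemmaSigmaTheta} yields $\sigma \cdot \Theta(T) = \pm \Theta(T^\sigma)$, while Lemmas \ref{lemmaPermutationInvariance} and \ref{lemmaRestrictionInvariance} ensure that $\calK$ is invariant under $S_n$ and under the inclusion $\calK_{n-1} \hookrightarrow \calK_n$, so induction on $n$ reduces to $R_1 \cup R_2 = \{1,\ldots,n\}$ (the base $n=1$ being vacuous). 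Similarly, if $s \in R_1 \cap R_2$, then $\overline{T} = d_s(T)$ has one fewer shared entry and $j_s(\overline{T}) = T$; Lemmas \ref{lemmaInsertDeleteTheta} and \ref{lemmaInsertDeleteKernel} together give $\Theta(T) = j_s\Theta(\overline{T}) \in \calK$, so induction on $|R_1 \cap R_2|$ reduces to $R_1 \cap R_2 = \varnothing$. A final compatible permutation then normalizes $T$ to a specific block tableau $T^0$ whose rows $I,H,J,K$ are consecutive intervals of integers.

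For this normal form I would induct on $|K|$. In the inner base $K=\varnothing$, both $\Theta(T^0)$ and the unique straightening relation $\Delta(T^0)\in\calK$ for $T^0$ have the same monomial support $\{x\binom{IJ'}{H'} : (J',H')\in\calI(T^0)\}$, so it suffices to show that the coefficients $\alpha$ and $\delta$ agree pointwise. I would argue in three stages. On the locus $r{+}1 \in H'$, apply $d_r e_r$: this sends $\Theta(T^0)$ to $\Theta(\overline{T})$ for a tableau $\overline{T}$ avoiding $r$ (hence already in $\calK$ by the earlier reduction), and sends $\Delta(T^0)$ to the unique straightening relation of $\overline{T}$; uniqueness forces $\alpha=\delta$ on this locus. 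On the remaining locus with $H'\neq\varnothing$, applying $e_i$ for $i>r$ to $\Delta(T^0)$ produces a combination of standard tableaux that lies in $\calK$ and therefore vanishes; this yields sign relations on $\delta$ that match those built into $\alpha$, propagating the equality. Finally, for $H' = \varnothing$, the difference reduces to a scalar multiple of the single monomial $x\binom{IHJ}{\varnothing}$; I would pin this scalar down by evaluating $\Theta(T^0)$ on the banded matrix $M$ of Example \ref{examplePfaffians}, rewriting the inner signed sum $\sum_{H'\subset HJ}\gre(H'J')$ as a Poincar\'e polynomial via Remark \ref{remarkGaussianBinomial}, and recognizing the resulting expression as the identity of Lemma \ref{lemmaGaussianBinomials} evaluated at $q=-1$, which vanishes.

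For the outer induction on $|K|$, if $T_u$ extends the base tableau by the first $u$ entries of $K$ and $\Theta(T_u)\in\calK$, apply $e_{k+1}+e_{k+2}$ with $k$ the last entry of $K_u$; this operator preserves $\calK$ since $\calK$ is a $\gog$-submodule, and a direct check on $\calI$ shows it sends $\Theta(T_u)$ to $\Theta(T_{u+1})$, closing the induction. The main obstacle I anticipate is the last stage of the inner base case: the vanishing of the residual scalar is not evident from the definition of $\Theta$ and instead requires recognizing a nontrivial alternating identity of Gaussian binomials at $q=-1$; the key insight is to test on the matrix with every pfaffian equal to $1$, which converts an algebraic identity into a coset-counting identity accessible via Remark \ref{remarkGaussianBinomial} and Lemma \ref{lemmaGaussianBinomials}. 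All the other reductions are cleanly organized transport of the relation along the symmetries of $\calK$ already secured in the preceding sections.
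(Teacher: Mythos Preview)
Your proposal is correct and matches the paper's argument essentially step for step: the reductions via compatible permutations and $j_s/d_s$ (Steps~1--3), the $K=\varnothing$ base handled by comparing $d_re_r\Theta$ and $d_re_r\Delta$, propagating via $e_i$, and evaluating on the matrix $M$ (Steps~4.1--4.3), and the induction on $|K|$ via $e_{k+1}+e_{k+2}$ (Step~5) are exactly the paper's proof of Theorem~\ref{theoremMain}. The remaining claims of the Corollary---that $\Theta(T)$ is a shuffling relation and that these generate $\calK$---follow immediately from Remark~\ref{remarkShufflingRelation} and the standard monomial theory discussion, as the paper also takes for granted.
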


One may hope to simplify $\Theta(T)$ by considering only the tableaux with $K_1'=\varnothing$, in analogy with the shuffling relations for determinants. This is not possible as the following example for $n=4$ shows.

If $T=\binom{23}{14}$ we have:
$$
\Theta(T)=x\binom{23}{14}-x\binom{13}{24}+x\binom{12}{34}-x\binom{123}{4\quad}+x\binom{234}{1\quad}-x\binom{134}{2\quad}+x\binom{124}{3\quad}-x\binom{1234}{\varnothing},
$$
$$
\Theta\binom{234}{1\quad}=x\binom{234}{1\quad}-x\binom{134}{2\quad}+x\binom{124}{3\quad}-x\binom{123}{4\quad}.
$$
So, if the sum $\overline{\Theta}(T)$ of all elements with $K_1'=\varnothing$ in $\Theta(T)$ (i.e. of those with $4$ in the bottom row) were an element of $\calK$, then we have also
$$
\Theta(T)-\overline{\Theta}(T)-\Theta\binom{234}{1\quad}=-x\binom{1234}{\varnothing}\in\calK
$$
that is impossible since $\binom{1234}{\varnothing}$ is standard.

\section{Conclusion and relations in arbitrary characteristic}

We begin by restating the main result in terms of pfaffians and we slightly generalize it to a commutative unitary base ring $R$. Let $\sfB_R =R[x_{ij}| 1\leq i < j \leq n+1 ]$. It is easy to check  that the pfaffians of an antisymmetric matrix as defined in Section \ref{sectionPfaffians} is a polynomial in the variables $x_{ij}$ with integer coefficients so we can consider the elements $\pf_I$ as elements of $\sfB_R$. For a tableau $T$, let $\pf_T \in \sfB$ be the product of the pfaffians which appear in $T$. Since, as we have already noticed $\pf_{ij}=x_{ij}$, the ring $\sfB_R$ is spanned as an $R$-module by the elements $\pf_T$. Notice that $\Theta(T)$ has integral coefficients hence we may define $\Theta_\pf(T)$ as the element of $\sfB_R$ obtained by mapping $x(I)\longmapsto\pf_I$ for $I\in\rows{n}$ and $I\neq \emptyRow$, and mapping $x(\emptyRow)$ to $1$.

\begin{theorem}
Let $R$ be a commutative unitary ring.
\begin{enumerate}
\item The set of $\pf_T$ with $T$ standard and $T$ not containing the empty row is a $R$--basis of $\sfB_R$;
\item the ring $\sfB_R$ is defined by quadratic (but not necessarily homogenous) relations in the generators $\pf_I$;
\item for all non standard $T$ with two rows we have $\Theta_\pf(T) =0$ and these equations generate (as a $R$--module) the set of quadratic relations.
\end{enumerate}
\end{theorem}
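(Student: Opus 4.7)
The strategy is to bootstrap from the characteristic-zero Theorem~\ref{theoremMain} via base change through $\Z$. Set $\sfC_R\doteq R[y_I:I\in\standardRows{n},\,I\neq\emptyRow]$ and let $\pi_R:\sfC_R\to\sfB_R$ be the $R$-algebra surjection $y_I\mapsto\pf_I$; denote by $\widetilde{\Theta}(T)\in\sfC_R$ the natural lift of $\Theta_\pf(T)$ obtained from $\Theta(T)=\sum a_{H,K}\,x\binom{H}{K}$ by replacing each $x(I)$ by $y_I$ if $I\neq\emptyRow$ and by $1$ otherwise, so $\widetilde{\Theta}(T)$ has $\sfC_R$-degree at most $2$. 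Statement (1) asks that the monomials $\prod y_{I_i}$ indexed by standard tableaux $T$ with $\emptyRow\notin T$ map to an $R$-basis of $\sfB_R$; statements (2)--(3) describe $\ker\pi_R$.

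First I observe that $\Theta_\pf(T)=0$ holds universally. By construction $\Theta_\pf(T)$ is a polynomial in $\Z[x_{ij}]$, and by Proposition~\ref{propositionPfaffianWeightVector} combined with Theorem~\ref{theoremMain} it vanishes as a function on $\goX_{n+1}(\C)$. Hence it is the zero polynomial in $\Z[x_{ij}]$, so $\Theta_\pf(T)=0$ in $\sfB_R=R[x_{ij}]$ for every commutative unitary ring $R$.

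Next I establish (1). Since $x_{ij}=\pf_{ij}$, products $\pf_T$ over all tableaux generate $\sfB_R$ as an $R$-module. For a non-standard two-row tableau $\binom{I}{J}$, Remark~\ref{remarkShufflingRelation} together with the previous paragraph gives an integral identity expressing $\pf_I\pf_J$ as a $\Z$-linear combination of products $\pf_H\pf_K$ with $(H,K)\neq(I,J)$, $H\leq I,J$ and $K\geq I,J$ (where $H$ or $K$ may be empty, reducing the pfaffian-degree via $\pf_\emptyRow=1$). A well-founded order on multisets of pairs of standard rows, classical in standard monomial theory, makes this substitution strictly decreasing; iterating rewrites any $\pf_T$ integrally as a combination of standard $\pf_{T'}$ with $\emptyRow\notin T'$. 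For linear independence, the inclusions $\sfB_\Z\subset\sfB_\Q\subset\sfB_\C$ reduce to the $\C$-case, where the injectivity of $\psi_m$ (Section~\ref{section32}) together with the standard monomial basis of $\sfA_m$ (after collapsing trailing empty rows via $x(\emptyRow)=1$) yields the required independence. Thus the standard $\pf_{T'}$ form a $\Z$-basis of $\sfB_\Z$, and the isomorphism $\sfB_R\cong\sfB_\Z\otimes_\Z R$ transports (1) to arbitrary $R$.

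Finally I prove (2) and (3). Let $K_R=\ker\pi_R$ and $K_R^{(\leq 2)}\subset K_R$ be the $R$-submodule of elements of $\sfC_R$-degree at most $2$; let $Q_R\subset K_R^{(\leq 2)}$ be the $R$-submodule generated by the $\widetilde{\Theta}(T)$'s. Given $f\in K_R^{(\leq 2)}$, expand $f$ as an $R$-combination of $y_I y_J$, $y_I$ and $1$; for each non-standard pair $(I,J)$ occurring with coefficient $a$ subtract $a\,\widetilde{\Theta}\binom{I}{J}$, and iterate on the well-founded order of the preceding paragraph. This reduces $f$ modulo $Q_R$ to an $R$-combination of standard monomials still lying in $K_R$, which must vanish by the $R$-linear independence in (1). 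Hence $f\in Q_R$, which is statement (3). The same straightening applied to arbitrary monomials in the $y_I$'s shows that $\sfC_R/(Q_R)$ is spanned by standard monomials; since these map bijectively to the $R$-basis of $\sfB_R$, the ideal $(Q_R)$ equals $K_R$, giving (2). The main obstacle is the combinatorial verification underlying the straightening iteration: one must exhibit a well-founded order in which every $\binom{H}{K}\neq\binom{I}{J}$ appearing in $\Theta_\pf\binom{I}{J}$ with $H\leq I,J$ and $K\geq I,J$ is strictly smaller than $\binom{I}{J}$, a classical check parallel to Hodge's treatment of the Pl\"ucker relations.
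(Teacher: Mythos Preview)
Your proof is correct and follows essentially the same route as the paper's own argument: deduce $\Theta_\pf(T)=0$ over $\Z$ from the complex case, use the shuffling relations (with the coefficient of $T$ equal to $1$) to straighten any $\pf_T$ into a $\Z$-combination of standard monomials, obtain linear independence from the known $\C$-result, and then pass to arbitrary $R$ via $\sfB_R\cong\sfB_\Z\otimes_\Z R$; for (2)--(3) both you and the paper construct the quotient of the free polynomial ring by the ideal generated by the lifts $\widetilde{\Theta}(T)$ and check that standard monomials span it, so that the natural map to $\sfB_R$ is an isomorphism. Your presentation is a bit more explicit about the well-founded order needed for termination of the straightening, which the paper leaves implicit, but the underlying argument is the same.
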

\begin{proof}
For $R=\C$ the result follows by Corollary \ref{corollaryMain} and Section \ref{section32}. For $R=\Z$ notice that $B_\Z$ is a subring of $B_\C$; in particular the polynomial $\Theta_\pf(T)$ vanish. Since $\pf_T$ has coefficient $1$ in $\Theta_\pf(T)$, this allows to write any element $\pf_T$ as a $\Z$--linear combination of pfaffians of standard tableaux, clearly we may assume also that the empty row does not appear. Since the polynomials $\pf_T$ with $T$ as in (1) are linear indipendent over $\C$ we have proved that they are a $\Z$--basis, proving (1) for $\Z$.

Now (2) and (3) for $\Z$ follows by a standard argument. Indeed consider the ring $\sfC$ generated over $\Z$ by indeterminates $t_I$ with $I\in\standardRows{n},\,I\neq\varnothing$ module the ideal generated by the polynomials $\Theta^C(T)$ where $T$ runs in the set of non standard tableau with two standard rows obtained by mapping $x(I)$ to $t_I$ and $x(\varnothing)$ to $1$ in $\Theta(T)$.

Arguing as above we see that also $C$ has a basis given by the set of $t_T$ with $T$ running in the set of standard tableaux without the empty row. So the map $t_I\longmapsto \pf_I$ is an isomorphism and (2) and (3) follows.

The general case follows by $B_R=B_\Z\otimes_\Z R$.
\end{proof}

In a similar way we generalize Corollary \ref{corollaryMain} from $\C$ to $R$ with $R$ a field or the integers. The group $G$, the variety $\LGr$, and the line bundle $\calL$ may be defined in a flat way over the integers, hence they may be defined over $R$ and we denote by $G_R$, $\LGr_R$ and $\calL_R$ the associated schemes and line bundles. Let us define $A_R$ as $\bigoplus_{m\geq 0}H^0(\LGr_R,\calL_R^m)$. As proved in Remark $7$ in \cite{SeshadriP}, $A_\Z$ is generated in degree $1$ and if $R$ is a field then $A_R$ is isomorphic to $A_\Z\otimes_\Z R$. So arguing as in the proof of the previous theorem we have the following result.

\begin{theorem}Let $R$ be the set of integers or a field.
\begin{enumerate}
\item The sections $y(T)$ with $T$ standard are an $R$--basis for the ring  $\sfA_R$;
\item the ring $\sfA_R$ is defined by quadratic (but not necessarily homogenous) relations in the generators $x(I)$;
\item for all non standard $T$ with two rows we have $\Theta(T) =0$ and these equations generate (as a $R$--module) the set of quadratic relations.
\end{enumerate}
\end{theorem}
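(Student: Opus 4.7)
The plan is to follow the template already used for the pfaffian analogue in the preceding theorem, treating the three cases $R=\C$, $R=\Z$, $R=$ arbitrary field in order, and bootstrapping from $\C$ down to $\Z$ via the flatness of $\LGr$ over $\Z$ and then up to arbitrary fields via base change.

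\smallskip

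First I would dispense with $R=\C$: claim (1) is standard monomial theory for the minuscule module $\spmo$ (already quoted in Section~\ref{sectionSpinModule}), claim (2) is the fact that $\sfA_\C$ is generated in degree one with quadratic relations, and claim (3) is exactly Corollary~\ref{corollaryMain}. Next, for $R=\Z$ one uses that $\LGr$ and $\calL$ spread out as flat schemes over $\Z$, so the natural map $\sfA_\Z\hookrightarrow\sfA_\C$ is injective. Since $\Theta(T)$ has integer coefficients, the relations $\Theta(T)=0$ hold in $\sfA_\Z$ because they hold in $\sfA_\C$. Together with the fact (cited from \cite{SeshadriP}) that $\sfA_\Z$ is generated in degree one, this shows that claim~(3) gives at least a set of valid relations; (1) for $\Z$ will then follow because the $y(T)$ for $T$ standard are linearly independent in $\sfA_\C$ and hence in $\sfA_\Z$, and they span because any monomial in the $x(I)$ can be expressed as a $\Z$--linear combination of standard monomials by iterated application of the shuffling relations.

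\smallskip

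The main work, as in the previous theorem, is then to upgrade (3) from "valid relations" to "generating set of relations" and to confirm (2). For this I would introduce the abstract $\Z$--algebra $\sfC$ generated by symbols $t_I$, one for each $I\in\standardRows{n}$, modulo the ideal $\goJ$ generated by the polynomials $\Theta^{\sfC}(T)$ obtained from $\Theta(T)$ by substituting $t_I$ for $x(I)$ (with $t_\emptyRow$ replaced by $1$ wherever it appears in a relation that is not formally homogeneous). There is an obvious surjection $\sfC\twoheadrightarrow\sfA_\Z$, $t_I\mapsto x(I)$. The goal is to show this map is an isomorphism. Since each $\Theta^\sfC(T)$ has the non-standard tableau $T$ appearing with coefficient $1$, one can mimic the straightening argument of the previous theorem's proof: order the two-row tableaux so that each shuffling relation expresses a non-standard $T$ in terms of tableaux $(H,K)$ which are "closer to standard" (either already standard, or smaller in the partial order on pairs), and argue by noetherian induction to rewrite any product $t_{I_1}\cdots t_{I_m}$ as a $\Z$--linear combination of standard monomials $t_T$. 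Combining this spanning statement in $\sfC$ with the linear independence of standard monomials already established in $\sfA_\C$ (hence in $\sfA_\Z$) gives that $\sfC\to\sfA_\Z$ is an isomorphism; this at once yields (2) and (3) over $\Z$.

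\smallskip

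Finally, for $R$ an arbitrary field, I would invoke Remark~7 of \cite{SeshadriP} to get $\sfA_R\simeq\sfA_\Z\otimes_\Z R$, and then tensor the presentation of $\sfA_\Z$ by generators and (shuffling) relations with $R$ to deduce (1), (2), (3) over $R$. The main obstacle I anticipate is the noetherian induction inside the abstract ring $\sfC$: one must choose an ordering on pairs of rows that simultaneously certifies the termination of the straightening procedure \emph{and} is compatible with the bound given by conditions (4) and (5) in the definition of $\Theta(T)$. This is essentially the same combinatorial core used in the $\C$ case, but now it has to be done purely over $\Z$, without leaning on the $\C$--linear algebra of $\sfA_\C$ to justify termination; fortunately Remark~\ref{remarkShufflingRelation}, which pins down the relative position of the tableaux occurring in $\Theta(T)$ with respect to $T$, provides exactly the information needed to make the induction work.
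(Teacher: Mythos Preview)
Your proposal is correct and matches the paper's approach essentially verbatim: the paper's own proof is just the single sentence ``arguing as in the proof of the previous theorem'', together with the citation of Remark~7 in \cite{SeshadriP} for generation in degree one and for $\sfA_R\simeq\sfA_\Z\otimes_\Z R$ when $R$ is a field, and your write-up spells out exactly that template. One small point: in the $\sfA$ setting the empty row $\emptyRow$ indexes a genuine degree-one generator $x(\emptyRow)$ (unlike in $\sfB$, where $\pf_\emptyRow=1$), and all the $\Theta(T)$ are already homogeneous of degree two, so your parenthetical about replacing $t_\emptyRow$ by $1$ should simply be dropped --- keep $t_\emptyRow$ as a generator of $\sfC$ and the argument goes through unchanged.
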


\bibliographystyle{plain}

\end{document}